\documentclass[12pt]{article}%
   
\usepackage{amsmath,enumerate}
\usepackage{amsfonts}
\usepackage{amssymb}

\usepackage{comment}
\usepackage{color}

\setlength{\topmargin}{-.5in}
\setlength{\textheight}{9in}
\setlength{\oddsidemargin}{.125in}
\setlength{\textwidth}{6.25in} 

\setcounter{MaxMatrixCols}{30}
\newtheorem{theorem}{Theorem}[section]

\newtheorem{algorithm}[theorem]{Algorithm}

\newtheorem{claim}[theorem]{Claim}

\newtheorem{conjecture}[theorem]{Conjecture}

\newtheorem{proposition}[theorem]{Proposition}

\newenvironment{proof}[1][Proof]{\noindent\textbf{#1.} }
{\hfill \ \rule{0.5em}{0.5em}}

\title{Forbidden subgraphs and complete partitions}
\author{John Byrne\thanks{Department of Mathematical Sciences, University of Delaware. \texttt{jpbyrne@udel.edu}} \and Michael Tait\thanks{Department of Mathematics \& Statistics, Villanova University. Research partially supported by NSF grants DMS-2011553 and DMS-2245556. \texttt{michael.tait@villanova.edu}} \and Craig Timmons\thanks{Department of Mathematics and Statistics, California State University, Sacramento.  Research partially supported by Sac State (P3) Program.  
\texttt{craig.timmons@csus.edu}}}

\begin{document}
\maketitle

\begin{abstract}
    A graph is called an $(r,k)$-graph if its vertex set can be partitioned into $r$ parts, each having at most $k$ vertices and there is at least one edge between any two parts. Let $f(r,H)$ be the minimum $k$ for which there exists an $H$-free $(r,k)$-graph. In this paper we build on the work of Axenovich and Martin, obtaining improved bounds on this function when $H$ is a complete bipartite graph or an even cycle.  Some of these bounds are best possible up to a constant factor
    and confirm a conjecture of Axenovich and Martin in several cases.  
    \end{abstract}



\section{Introduction}

Let $r \geq 2$ and $k$ be positive integers and 
write $[r]$ for $\{1,2, \dots ,r \}$.
A graph $G$ is an \emph{$(r,k)$-graph} if there is a partition of $V(G)$ into  sets $V_1, V_2, \dots , V_r$ such that $|V_i| \leq k$ for $1 \leq i \leq r$, and for all $1 \leq i < j \leq r$, there is 
an $e \in E(G)$ with one endpoint in $V_i$ and the other in $V_j$.   
Given an $(r,k)$-graph $G$, if each $V_i$ induces a connected subgraph, then contracting each $V_i$ to a single vertex shows that $K_r$ is a minor of $G$. Consequently, an $(r,k)$-graph with partition $V_1,\dots, V_r$ is sometimes called a \textit{$k$-split of $K_r$}.  A more general notion of $k$-splits exists and the study of splits goes back to Heawood \cite{heawood}.    
Originally graph splits were mostly studied with respect to topological properties (see for example \cite{planarsplit}).  More recently, the study of splits has been considered from an extremal perspective. 
There are several interesting questions one can ask, such as the largest complete minor in an $H$-free graph \cite{Fox, KrivSudakov} or in a graph without a sparse cut \cite{KrivNenadov}.  

In another direction, if each $V_i$ is an independent set, 
then the parts $V_1, \dots , V_r$ give a proper vertex coloring of $G$.  This leads to the achromatic number of a graph which we define now.  A \emph{complete $r$-coloring} of a graph $G$ is a proper vertex coloring $c : V(G) \rightarrow [r]$ such that between any two distinct color classes there is at least one edge.  The {\em achromatic number} of $G$, denoted $\chi_a (G)$, is the maximum $r$ for which $G$ has a complete $r$-coloring.  
If there is no restriction on the $V_i$'s, then we have  
a \emph{pseudocomplete $r$-coloring} of $G$.  Pseudocomplete colorings drop the restriction that the coloring must be proper, and we write 
$\psi (G)$ for the {\em pseudoachromatic number} of $G$.  
Complete and pseudocomplete colorings of graphs and their associated parameters have a large body of work devoted to them. Researchers have studied them for specific graph families
\cite{achromaticCirculant, achromaticJGT, achromaticDM,  achromaticDM2, achromaticJCTB2} and
hypergraphs \cite{completeDM, completeDMhypergraph}.
The surveys \cite{edwardsSurvey, achromaticSurvey} contain additional references.  

Kostochka \cite{Kostochka} and Thomason \cite{Thomason2006} used pseudocomplete colorings in their influential work on the minimum degree forcing a complete minor, so there are useful relationships between these two notions (see Section 3 of \cite{Thomason2006} or Bollob\'{a}s, Reed, Thomason \cite{Bollobas} for brief discussions).  The focus of this paper is a function,
considered by Barbanera and Ueckerdt \cite{Barb} and studied further by Axenovich and Martin \cite{AxeMartin}, that falls into the branch of extremal partitioning problems with (possibly) some condition on the parts.  
For a graph $H$, define
\[
f ( r, H) = \min \{ k : \mbox{exists an $H$-free $(r,k)$-graph} \}.
\]
Assuming that $H$ has minimum degree at least 3, Axenovich and Martin 
give a simple construction that takes an $H$-free $(r,k)$-graph
and produces an $H$-free graph with a $K_r$ minor by adding at most $k+1$ vertices and at most $2k$ edges to each $V_i$ so that the new color classes are connected.  Regarding coloring, if $G$ is an 
$(r,k)$-graph, then $r \leq \psi (G)$.  Furthermore, any 
complete or pseudocomplete $r$-coloring 
defines an $(r,k)$-partition for some $k$. Since we will consider only $H$-free graphs in this  paper and removing edges cannot create a copy of $H$, any graph with partition $V_1,\dots, V_r$ that we consider may be assumed to have each $V_i$ an independent set.
So bounds for $f(r,H)$ can imply bounds on achromatic numbers and pseudoachromatic numbers and vice versa.
However, in this paper we want to minimize the largest size of a color class in a pseudocomplete $r$-coloring rather than maximizing the number of color classes. Because of these differing objectives, we phrase all of our work in the language of \cite{AxeMartin} instead of using the terminology from these graph coloring and minor problems. 
  It would be interesting to explore these connections in more depth, but that is not the focus of this paper.

For a set $\mathcal{H}$ of graphs, write 
$\textup{ex} ( n , \mathcal{H})$ for the Tur\'{a}n number of $\mathcal{H}$.  This is the maximum number of edges in an $n$-vertex graph that does not contain any graph in $\mathcal{H}$ as a subgraph.  When $\mathcal{H} = \{H \}$, we write $\textup{ex} ( n , H)$ instead of 
$\textup{ex}(n , \mathcal{H} )$.   
A simple observation is that if $G$ is an $H$-free $(r,k)$-graph, then 
\[
\binom{r}{2} \leq e(G) \leq \textup{ex}( rk , H).
\]
The first inequality is true since there is at least 
one edge between any two of the $r$ parts.  The second holds because $G$ is $H$-free with at most $rk$ vertices.  For fixed $r$, this inequality gives a lower bound on $k$ which implies a lower bound on $f(r,H)$.  Any lower bound on $f(r,H)$ obtained using  
\begin{equation}\label{trivial lb}
\binom{r}{2} \leq \textup{ex}( rk, H) 
\end{equation}
will be called the ``trivial (lower) bound".  The inequality $\binom{r}{2} \leq e(G)$ also holds if $r$ is replaced by $\chi_a (G)$, $\psi (G)$, or the Hadwiger number of $G$ which is the largest $r$ such that $K_r$ is a minor of $G$ (see Proposition 1 of \cite{KrivSudakov} for example).      

If $H$ is not bipartite, a simple argument given in \cite{AxeMartin} shows that $f(r,H) \leq 2$ for all $r \geq 1$.  For this reason we will assume that $H$ is bipartite.  
Supposing for a moment that 
$\textup{ex}(N, H) = \Theta( N^{ \eta} )$ for some
$0 < \eta < 2$, the trivial lower bound 
implies $f(r,H) = \Omega ( r^{2 / \eta - 1} )$.  
Axenovich and Martin proved an upper bound that differs from this lower bound by a logarithmic factor under an assumption on the Tur\'{a}n number of $H$.  

\begin{theorem}[Axenovich, Martin \cite{AxeMartin}]\label{AM:theorem}
	If $H$ is a bipartite graph that is not a forest with $\textup{ex}(N, H) = \Theta( N^{\eta} )$, then there are positive constants $c$ and $C$ such that for all $r$,  
	\[
	c r^{2/ \eta - 1} \leq f(r, H) \leq C r^{2/ \eta - 1} \log^{1/ \eta }r.
	\]
	\end{theorem}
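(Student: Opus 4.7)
The lower bound is immediate from the trivial bound (\ref{trivial lb}): if $G$ is an $H$-free $(r,k)$-graph realizing $k = f(r,H)$, then $\binom{r}{2} \le \textup{ex}(rk, H) = O((rk)^\eta)$, and rearranging gives $k = \Omega(r^{2/\eta - 1})$.

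For the upper bound the plan is to apply the probabilistic method to a near-extremal $H$-free graph. Set $k := \lceil C r^{2/\eta - 1} \log^{1/\eta} r \rceil$ with $C$ a large constant to be chosen, put $N := rk$, and let $G_0$ be an $H$-free graph on $N$ vertices with $m \asymp N^\eta$ edges (such a $G_0$ exists by the Tur\'an hypothesis). Partition $V(G_0)$ uniformly at random into blocks $V_1, \dots, V_r$ of size $k$ (equivalently, color each vertex independently and uniformly in $[r]$ and correct block sizes afterward). The goal is to show that with positive probability every pair of blocks spans at least one edge, in which case $G_0$ together with this partition is the desired $H$-free $(r,k)$-graph.

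Fix $\{i,j\}$ and let $X_{ij}$ be the number of edges of $G_0$ between $V_i$ and $V_j$. A direct calculation gives $\mu := \expect[X_{ij}] \asymp m/r^2 \asymp r^{\eta - 2} k^\eta$, so the choice of $k$ makes $\mu$ at least any prescribed constant times $\log r$ after adjusting $C$. The heart of the argument is to turn this first-moment estimate into an exponential tail $\prob(X_{ij} = 0) = o(r^{-2})$, so that a union bound over the $\binom{r}{2}$ pairs yields a valid partition. The natural tool is Janson's inequality applied to the events $A_e := \{e \text{ crosses } V_i \text{ and } V_j\}$ indexed by $e \in E(G_0)$, which gives
\[
\prob(X_{ij} = 0) \le \exp\!\left(-\frac{\mu^2}{2(\mu + \Delta)}\right), \qquad \Delta \asymp \frac{1}{r^3}\sum_{v \in V(G_0)} \deg_{G_0}(v)^2.
\]

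The main obstacle is that extremal $H$-free graphs can be highly irregular, so without further work $\Delta$ may dominate $\mu$ and Janson's bound becomes trivial. The standard remedy is a preprocessing step for $G_0$: iteratively delete vertices of below-average degree to reach a subgraph with $\Theta(N^\eta)$ edges and minimum degree $\gtrsim N^{\eta-1}$, and then, if necessary, pass to a subgraph of maximum degree $O(N^{\eta-1})$ up to polylogarithmic factors via random vertex sampling or a greedy chain argument. With this control, $\sum_v \deg(v)^2 \lesssim N^{2\eta-1}$ and hence $\Delta \lesssim \mu$, so Janson yields $\prob(X_{ij} = 0) \le \exp(-\Omega(\mu)) = r^{-\Omega(1)}$, which by enlarging $C$ is made $o(r^{-2})$. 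The extra $\log^{1/\eta} r$ factor in the statement is precisely what is needed to absorb the polylogarithmic losses from the preprocessing and to push $\mu$ past the threshold $\Omega(\log r)$ demanded by the union bound.
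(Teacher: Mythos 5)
This theorem is cited from Axenovich--Martin and not proved in the present paper, so there is no in-paper argument to compare your proposal against directly; here is an assessment on its own merits.

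Your lower bound is correct and is exactly the trivial bound \eqref{trivial lb}. For the upper bound, the high-level plan (random partition of a near-extremal $H$-free graph, union bound over pairs via a Janson-type exponential tail, with the $\log^{1/\eta}r$ factor coming from pushing $\mathbb{E}[X_{ij}]$ to $\Theta(\log r)$) is plausible and the bookkeeping that identifies $\Delta \asymp r^{-3}\sum_v \deg(v)^2$ as the obstruction is right. But the regularization step is a genuine gap, not a detail. Iteratively deleting vertices of below-average degree gives control of the \emph{minimum} degree, which does nothing for $\sum_v \deg(v)^2$; what you need is an upper bound on degrees. Neither of your proposed remedies achieves this: random vertex sampling preserves the ratio of a vertex's degree to the average, so a star-like vertex remains a star-like vertex, and ``greedy chain argument'' is too vague to evaluate. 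The danger is concrete: disjointly adjoining a star $K_{1,N/2}$ to a near-extremal $H$-free graph on $N/2$ vertices keeps $\Theta(N^\eta)$ edges and $H$-freeness but drives $\sum_v \deg(v)^2$ up to $\Theta(N^2)$, which strictly exceeds $N^{2\eta-1}$ whenever $\eta<3/2$. In that regime $\Delta \gg \mu$ and Janson gives nothing, no matter how $C$ is enlarged; increasing $k$ does not help either, since $\mu$ and $\Delta$ both scale with $N=rk$ and the ratio $\Delta/\mu$ only worsens. The hypothesis $\mathrm{ex}(N,H)=\Theta(N^\eta)$ guarantees existence of dense $H$-free graphs, not of near-regular ones.

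What would actually close the gap is a nontrivial almost-regularization lemma, in the spirit of Erd\H{o}s and Simonovits, asserting that any graph with $\geq cn^{1+\alpha}$ edges contains a $K$-almost-regular subgraph on $n'\gg 1$ vertices that still has $\Theta(n'^{1+\alpha})$ edges. Invoking such a lemma requires care: it does not let you prescribe $n'$, so one must argue separately that the resulting vertex count can be made $\Theta(N)$, since otherwise the surviving edge count is too small for $\mu\asymp\log r$. As written, your proposal essentially assumes the conclusion of such a lemma without proving or citing it, which is where the argument breaks.
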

They conjectured that the trivial lower bound (\ref{trivial lb}) gives the correct order of magnitude.  If true, it shows that the logarithmic factor in Theorem \ref{AM:theorem} can be removed.  
       
\begin{conjecture}[Axenovich, Martin \cite{AxeMartin}]\label{conj}
	Let $H$ be a bipartite graph that is not a forest.  There is a positive constant $c = c(H)$ such that for any positive integers $r$ and $k$ for which $\textup{ex}( rk, H) > c \binom{r}{2}$, there is an $H$-free $(r,k)$-graph, i.e., $f(r,H) < k$.      
	\end{conjecture}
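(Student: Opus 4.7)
The plan is to attack Conjecture~\ref{conj} by the probabilistic method, replacing the union bound implicit in the proof of Theorem~\ref{AM:theorem} with the Lov\'{a}sz Local Lemma (LLL) in order to shave off the logarithmic factor. Given $r$ and $k$ with $m := \textup{ex}(rk, H) > c\binom{r}{2}$ for a constant $c = c(H)$ to be chosen, I would start from an $H$-free graph $G$ on $N = rk$ vertices with $m$ edges, ideally one that is near-regular (more on this below), and partition $V(G)$ into $r$ parts by assigning each vertex independently and uniformly to one of $V_1, \ldots, V_r$, so that each part has expected size $k$.

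Next I would define the bad events $A_{ij}$ = ``no edge of $G$ lies between $V_i$ and $V_j$'' for $\{i,j\} \in \binom{[r]}{2}$, together with $B_i$ = ``$|V_i| > (1+\varepsilon)k$'' for $i \in [r]$. The key observation is that $A_{ij}$ is independent of $A_{k\ell}$ whenever $\{i,j\} \cap \{k,\ell\} = \emptyset$, so the dependency graph on the $A_{ij}$ has maximum degree $O(r)$, and the $B_i$ contribute only $O(1)$ extra dependencies each. To apply LLL I then need to show $\Pr(A_{ij}) \le 1/(e \cdot O(r))$, i.e., of order $1/r$, together with the standard Chernoff bound for the $B_i$.

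The core estimate is Janson's inequality applied to the indicators that each edge of $G$ is routed between $V_i$ and $V_j$. For a single edge $uv$, $\Pr(uv$ lies between $V_i$ and $V_j) = 2/r^2$, so $\mu := \expect[e_G(V_i, V_j)] = 2m/r^2$, and the ``conflict'' term $\Delta$ is essentially the number $P_2$ of $2$-paths in $G$ divided by $r^3$. If $G$ is nearly $(m/N)$-regular, then $P_2 \approx m^2/N$ and Janson gives $\Pr(A_{ij}) \le \exp(-\Omega(\mu^2/\Delta)) = \exp(-\Omega(N/r)) = \exp(-\Omega(k))$. Hence LLL succeeds once $k \gg \log r$, which is comfortably true in the regime of the conjecture whenever $\textup{ex}(N,H) = \Theta(N^\eta)$ with $\eta > 1$.

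The main obstacle is that an arbitrary extremal $H$-free graph need not be near-regular, and Janson's bound degrades dramatically if a few vertices carry most of the $2$-paths, inflating $\Delta$ well beyond $m^2/N$. Thus the crux is to produce, inside any graph witnessing $\textup{ex}(rk,H) > c\binom{r}{2}$, an $H$-free subgraph that is simultaneously dense enough, near-regular, and has small codegrees so that $P_2$ is close to $m^2/N$. For the specific families treated in this paper (complete bipartite graphs and even cycles, where algebraic or random constructions supply near-regular host graphs near the Tur\'{a}n number) I would expect this plan to go through and thereby confirm the conjecture, which is presumably why the authors establish the conjecture ``in several cases'' rather than in full generality. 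A minor secondary issue is that independent assignment only produces parts of size $k(1\pm o(1))$; one can work with target size $(1-\varepsilon)k$ and absorb the loss into the constant $c(H)$, or simply include the $B_i$ as additional LLL events at negligible cost.
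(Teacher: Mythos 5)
The statement you are addressing is a \emph{conjecture}, and the paper does not prove it in general; it confirms it only in several special cases, always via explicit algebraic constructions with algebraically structured partitions (projective norm graphs quotiented by a subgroup, Wenger graphs, the Verstra\"ete--Williford $\theta_{3,4}$-free graph), never by a random partition. So your probabilistic route is genuinely different from anything in the paper. Unfortunately, it also contains a real error that makes the LLL strategy fail precisely in the regime the conjecture is about.

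The problem is the Janson estimate. In the conjecture's regime, $N=rk$ and $m=\mathrm{ex}(rk,H)\approx c\binom{r}{2}$, so $\mu=\mathbb{E}[e(V_i,V_j)]=2m/r^2$ is a \emph{constant} depending only on $c$, not something growing. Meanwhile, for a near-regular graph with $d=2m/N$ and $P_2\approx 4m^2/N$ paths of length two, the Janson overlap term is $\Delta\approx 2P_2/r^3\approx c^2/k=o(1)$. Since $\Delta<\mu$, the relevant Janson bound is $\Pr(A_{ij})\le e^{-\mu+\Delta/2}\approx e^{-\mu}$, a constant bounded away from zero; the $\exp(-\mu^2/(2\Delta))$ form is only available (and only useful) when $\Delta\ge\mu$, which is exactly the opposite of the regime you are in. Your claimed bound $\exp(-\Omega(k))$ is therefore not what Janson gives. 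Moreover, even if one could push the failure probability down to $1/\mathrm{poly}(r)$, the LLL cannot help here: each $A_{ij}$ shares an index with $\Theta(r)$ other events $A_{ik}$, $A_{\ell j}$, so the dependency degree is $\Theta(r)$, and LLL then demands $\Pr(A_{ij})\lesssim 1/r$, i.e.\ $\mu\gtrsim\log r$, i.e.\ $m\gtrsim r^2\log r$. Translating back through $\mathrm{ex}(N,H)=\Theta(N^\eta)$, this forces $k\gtrsim r^{2/\eta-1}(\log r)^{1/\eta}$ --- exactly the Axenovich--Martin bound with the log factor you were trying to remove, not an improvement. In short, neither Janson nor the LLL buys anything beyond the union bound in this problem, because the expected edge count between two random parts at the conjectured threshold is constant while the dependency degree is linear in $r$.

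The paper gets around all of this by not using randomness at all. For $K_{2,d+1}$ and $K_{t,(t-1)!+1}$ it takes the projective norm graph on $\mathbb{F}_{q^{t-1}}\times(\mathbb{F}_q^*/K_d)$, chooses a subgroup $H\le\mathbb{F}_q^*/K_d$ and a transversal $A$ with $|H||A|=(q-1)/d$, and partitions the two sides so that an edge is \emph{guaranteed} between any pair of parts by a short algebraic argument (solvability of $N(x+y)\in a_1h_1a_2h_2K_d$), repairing the finitely many degenerate pairs with an induction. For $C_6$, $C_{10}$, $\theta_{3,4}$ it partitions the Wenger graphs and the theta-free graph along coordinate fibers so that the defining system of equations has a unique solution between any two parts. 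These constructions achieve $\mu\ge 1$ deterministically between every pair, which is exactly what a random partition cannot promise at this density. Your plan would work in the easier regime $k\gg r^{2/\eta-1}\log r$ (where a union bound already suffices), but not at the conjectured threshold.
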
  

Using the incidence graph of an affine plane, they proved that Conjecture \ref{conj} is true for $C_4$.  

\begin{theorem}[Axenovich, Martin \cite{AxeMartin}]\label{AM:theorem2}
The function $f(r , C_4)$ satisfies 
\[
r^{1/3} - o (r^{1/3} ) 
\leq f( r , C_4) \leq 
2 r^{1/3}  + o ( r^{1/3} ) .
\]
\end{theorem}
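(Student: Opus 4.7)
\textit{Lower bound.} I would apply Reiman's bound $\textup{ex}(n, C_4) \leq \tfrac{1}{2} n^{3/2}(1+o(1))$, the $C_4$ case of the K\H{o}v\'ari--S\'os--Tur\'an theorem, to the trivial bound (\ref{trivial lb}) with $n = rk$. This yields
\[
\binom{r}{2} \leq \tfrac{1}{2}(rk)^{3/2}(1+o(1)),
\]
and rearranging gives $k \geq r^{1/3}(1-o(1))$, which is the claimed lower bound on $f(r, C_4)$.

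\textit{Upper bound.} The plan is to construct a $C_4$-free $(r,k)$-graph explicitly from an affine plane. Given $r$, choose a prime power $q$ with $r^{2/3} \leq q \leq (1+o(1))r^{2/3}$, possible for all sufficiently large $r$ by standard estimates on gaps between prime powers, and let $I$ be the point--line incidence graph of $\textup{AG}(2,q)$. Then $I$ is bipartite with $q^2$ point-vertices and $q^2+q$ line-vertices, is $C_4$-free because any two points lie on a unique common line, and carries the $q+1$ parallel classes of the plane.

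I would partition $V(I)$ as follows. Fix one distinguished parallel class $\mathcal{C}_\ast$. For each line $\ell \in \mathcal{C}_\ast$, divide the $q$ points of $\ell$ into ``point-chunks'' of size $\lceil\sqrt{q}\,\rceil$, and append $\ell$ itself to one of these chunks; this yields $\sim q^{3/2}$ point-chunks covering every point and every line of $\mathcal{C}_\ast$. For each of the remaining $q$ parallel classes, divide its $q$ lines into ``line-chunks'' of size $\lceil\sqrt{q}\,\rceil$, yielding another $\sim q^{3/2}$ line-chunks that cover every line outside $\mathcal{C}_\ast$. Pair point-chunks with line-chunks bijectively, letting each part $V_i$ be the union of one point-chunk and one line-chunk. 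Then there are $r \sim q^{3/2}$ parts, each of size at most $2\lceil\sqrt{q}\,\rceil+1 = 2r^{1/3}(1+o(1))$, and if $r$ is smaller than the resulting count then one simply deletes the excess parts. The $C_4$-freeness is inherited from $I$.

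\textit{Main obstacle.} The key difficulty is arranging the bijection between point-chunks and line-chunks so that every pair of parts has at least one incidence edge. If $V_i$'s point-chunk lies on the line $\ell_i \in \mathcal{C}_\ast$ and $V_j$'s line-chunk lies in a parallel class $\mathcal{C}_j \neq \mathcal{C}_\ast$, then the $\sqrt{q}$ points of $V_i$ lie on $\sqrt{q}$ distinct lines of $\mathcal{C}_j$, and one of these $\sqrt{q}$ lines must fall in the $\sqrt{q}$-element line-chunk of $V_j$. Since $\sqrt{q}\cdot\sqrt{q} = q$ exactly equals the size of a parallel class, the counting is tight and a random pairing need not suffice; the coverage must be engineered algebraically, for example via the subfield structure $\mathbb{F}_{\sqrt{q}} \subset \mathbb{F}_q$ when $q$ is a square prime power, so that the chunk of points on $\ell_i$ hits $V_j$'s chunk of $\mathcal{C}_j$ for every $i,j$. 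Adjusting for values of $r$ that are not of the form $q^{3/2}$ for a prime power $q$ is a routine end-game using prime-gap estimates.
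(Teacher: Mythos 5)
Your lower bound is fine: the trivial inequality (\ref{trivial lb}) together with $\textup{ex}(N,C_4)\leq\tfrac12 N^{3/2}(1+o(1))$ gives $k\geq(1-o(1))r^{1/3}$ exactly as you say. The upper bound, however, has a genuine gap precisely at the step you flag as the ``main obstacle,'' and the fix you gesture at is the wrong algebraic structure. With the point-chunks and line-chunks taken to be additive cosets of $\mathbb{F}_{\sqrt q}$, the set of $b$-values hit by a point-chunk $P_i$ on $x=c_i$ within the parallel class of slope $m_j$ is the single coset $Y_i-m_jc_i+\mathbb{F}_{\sqrt q}$ (where $Y_i$ is the $y$-coset of $P_i$); since additive cosets are equal or disjoint, this either equals $V_j$'s $b$-coset or misses it entirely. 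Whether the $i\to j$ or $j\to i$ direction succeeds reduces to the condition ``$\alpha_i-\beta_j-\pi(m_jc_i)=0$ or $\alpha_j-\beta_i-\pi(m_ic_j)=0$'' (where $\pi$ projects onto the $\omega$-component), and a random pairing leaves $\Theta(q^3)$ bad pairs; no bijection $\sigma:(c,\alpha)\mapsto(m,\beta)$ is exhibited that clears them all, and the natural candidates fail.

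This paper does not reprove Theorem \ref{AM:theorem2} (it cites \cite{AxeMartin}), but the generalization it does prove --- Theorem \ref{general kst}, of which the $C_4$ case is $t=2$, $d=1$ --- shows what the correct structure is, and it is \emph{multiplicative}, not additive. One takes a subgroup $H\leq\mathbb{F}_q^*$ with $|H|\approx\sqrt q$ and a transversal $A$, groups point-vertices by cosets $h_1A$ and line-vertices by cosets $a_2H$; the crucial fact that your construction lacks is that $(a_1,h_2)\mapsto a_1h_1a_2h_2$ is a bijection $A\times H\to\mathbb{F}_q^*$ for every fixed $h_1,a_2$, so \emph{every} nonzero target value (hence every pair of parts with $x+y\neq0$) is achievable, with no need to tune a pairing. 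Moreover, even that argument leaves a small family of ``bad pairs'' coming from the degenerate equation $x+y=0$; these are repaired by recursively inserting an $H$-free $(R,f(R,H))$-graph on roughly $R=|A|+|H|$ auxiliary vertices per part, a lower-order correction. Your proposal contains neither the multiplicative factorization that makes coverage automatic nor the repair mechanism for the degenerate cases, so as written it does not yield the claimed upper bound.
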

While this paper was under review, Taranchuk and the third author \cite{Taranchuk2024} removed the factor of 2 in the upper bound and so 
$f(r,C_4) = (1 + o(1))r^{1/3}$.  Now for any $d \geq 1$, $C_4$ is a subgraph of $K_{2,d+1}$ so 
that $f(r,K_{2,d+1}) \leq f(r,C_4 ) = (1 + o(1))r^{1/3}$.    Combining this with the asymptotic formula 
 $\textup{ex}(N, K_{2,d+1})= \frac{\sqrt{d}}{2} N^{3/2} + o(N^{3/2})$, proved by F\"{u}redi \cite{Furedi}, and 
 the trivial bound (\ref{trivial lb}) gives
  \[
 \frac{1}{d^{1/3}} r^{1/3} - o(r^{1/3} ) 
 \leq f( r , K_{2,d+1}) 
 \leq r^{1/3}  + o (r^{1/3} ).
 \]
 For fixed $d \geq 2$, Theorem \ref{AM:theorem2} proves Conjecture \ref{conj} for $H = K_{2,d+1}$.  However, this result does not capture the dependence of $f(r, K_{2,d+1})$ on $d$.   
 Using the projective norm graphs, we prove an upper bound on $f(r, K_{2,d+1})$ to one that gives the correct dependence on $d$.  Additionally, we confirm Conjecture \ref{conj} for $K_{ t , s}$ whenever $s>(t-1)!$.

\begin{theorem}\label{theorem:ktt}
(a) For $d \geq 3$,  
\[
 \frac{1}{d^{1/3}}   r^{1/3}  - o(r^{1/3}) \leq 
f( r , K_{2,d + 1} ) \leq 
\frac{2}{d^{1/3}} \cdot \frac{1}{( 1 - 1.5 d^{-1/2} )^{5/3} }r^{1/3} + o(r^{1/3}).
\]

\smallskip
\noindent
(b) 
For $t \geq 3$,
\[
\frac{1}{  (  (t-1)! - t )^{ 1/ (2t - 1 ) } } 
r^{1/(2t-1) } - o (r^{ 1 / (2t-1) })
 \leq 
 f( r , K_{ t , (t-1)! + 1  } ) 
 \leq 2 r^{ 1 / (2t-1) } +  o ( r^{ 1/ (2t - 1)  } ).
\]
 \end{theorem}
Unless $d > 34$, the upper bound in part (a) of Theorem \ref{theorem:ktt} does not improve upon the bound   
$f(r,K_{2,d+1}) \leq (1 + o(1))r^{1/3}$.    
 

Next, we turn our attention towards even cycles by confirming 
Conjecture \ref{conj} for $C_6$ and $C_{10}$.  

\begin{theorem}\label{theorem:cycles}
For the even cycles $C_6$ and $C_{10}$, 
\begin{center}
	$f( r, C_6) \leq 2 r^{1/2} + o ( r^{1/2} ) $~~ and ~~
 $f(r,C_{10}) \leq 2 r^{2/3}  + o( r^{2/3})$.
	\end{center}
\end{theorem}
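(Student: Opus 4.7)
Both bounds follow from an explicit construction modelled on the affine-plane construction of Theorem \ref{AM:theorem2}, but replacing the affine plane by the appropriate generalized polygon: a generalized quadrangle of order $(q,q)$ for $C_6$, and a generalized hexagon of order $(q,q)$ for $C_{10}$. The incidence graphs of these structures have the right extremal density for the forbidden cycle and obey a distance axiom that allows verification of the $(r,k)$-condition with no logarithmic loss.

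For $C_6$: let $I$ be the bipartite incidence graph of $Q(4,q)$, of girth $8$, hence $C_6$-free, with $|P|=|L|=(q+1)(q^2+1)$. Fix a line-spread $\mathcal{S}=\{s_1,\dots,s_{q^2+1}\}$ (pairwise disjoint lines partitioning $P$) and an ovoid $\mathcal{O}=\{o_1,\dots,o_{q^2+1}\}$ (points meeting each line in exactly one point); both exist for $Q(4,q)$. Set $r=q^2+1$ and $V_i=\{\textrm{points on }s_i\}\cup\{\textrm{lines through }o_i\}$, giving a partition of $V(I)$ into $r$ parts of size $2(q+1)$. For $i\neq j$, the GQ axiom applied to the non-incident pair $(o_j,s_i)$ yields a line $m$ through $o_j$ meeting $s_i$ at a point $p$ (and if $o_j\in s_i$ just take $p=o_j$); then $m\in V_j$, $p\in V_i$, and $pm\in E(I)$ is the required edge. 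Choosing $q$ with $q^2+1\geq r$ gives $k=2(q+1)=2r^{1/2}+o(r^{1/2})$.

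For $C_{10}$: let $I$ be the bipartite incidence graph of a generalized hexagon $H(q)$ of order $(q,q)$, of girth $12$ and $|P|=|L|=(q+1)(q^4+q^2+1)$. In place of a spread and ovoid, use a \emph{distance-$2$ ovoid} $\mathcal{O}=\{o_1,\dots\}$ (a point set such that every point of $H(q)$ is at $I$-distance $\leq 2$ from a unique $o_i$) and its dual \emph{distance-$2$ spread} $\mathcal{L}=\{\ell_1,\dots\}$; each of the radius-$1$ collinearity balls $B_P(o_i)$ and $B_L(\ell_i)$ has $q^2+q+1$ elements, and both families have size $r:=q^3+1$. Set $V_i=B_P(o_i)\cup B_L(\ell_i)$, of size $2(q^2+q+1)=2r^{2/3}+o(r^{2/3})$. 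For $i\neq j$, the incidence graph of $H(q)$ has diameter $6$, so $d_I(o_i,\ell_j)\leq 5$; writing a geodesic $o_i - e_1 - v_1 - e_2 - v_2 - \ell_j$ of length $5$ (shorter geodesics are handled analogously), one has $v_1\in B_P(o_i)\subseteq V_i$ and $e_2\in B_L(\ell_j)\subseteq V_j$ with $v_1\in e_2$, giving the required edge $v_1 e_2$ of $I$.

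The main structural obstacle is the existence of a distance-$2$ ovoid (equivalently a distance-$2$ spread) in $H(q)$. These are classical for $q$ a power of $3$ via the Ree--Tits subgeometry of $H(q)$, but are not established for every prime power $q$. To get the bound for all $r$, one may either restrict to $q=3^h$ and interpolate using the density of $\{3^h\}$ (which costs only a $1+o(1)$ factor in $k$), or replace the exact distance-$2$ ovoid by an algebraic/random approximation and verify the $(r,k)$-condition by concentration and a union bound. Either route preserves the constant $2$ in the upper bound and removes the logarithmic factor inherited from Theorem \ref{AM:theorem}.
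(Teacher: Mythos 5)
Your construction is genuinely different from the paper's (which uses the Wenger graphs $W_2(q)$ and $W_4(q)$, partitioned by fixing alternating coordinates so that there is exactly one edge between any point-part and any line-part, followed by an arbitrary pairing and a density-of-primes argument). Unfortunately your route has a gap in both cases, and it is the same gap: the combinatorial substructures you need do not exist for a dense enough set of prime powers $q$. For $C_6$, the assertion that $Q(4,q)$ has both an ovoid and a line-spread is false for odd $q$: $Q(4,q)$ always has ovoids but has spreads only when $q$ is even (dually, $W(3,q)$ always has spreads but has ovoids only for $q$ even, by Thas's theorem), and for odd $q$ neither classical GQ of order $(q,q)$ carries both. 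Restricting to $q=2^h$ gives $r=q^2+1$ only along a geometric sequence, and interpolating between consecutive admissible $r$ (which jump by a factor of $4$) costs a multiplicative factor of $2$ in $k$, destroying the constant $2$ in the theorem.

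The $C_{10}$ case is worse. What you call a distance-$2$ ovoid of $H(q)$ (a set of $q^3+1$ points whose collinearity balls partition the point set) is an ovoid of the hexagon, and these are known to exist only for $q=3^{2e+1}$ (Ree--Tits); their existence is open in general. Your proposed repairs do not close this: interpolating over $q=3^h$ does \emph{not} cost only a $1+o(1)$ factor --- consecutive admissible values of $r=q^3+1$ differ by a factor of at least $27$, so the loss in $k$ is at least $27^{2/3}=9$ --- and the ``random approximation'' of an ovoid is unsubstantiated, since a non-perfect covering leaves many pairs $(V_i,V_j)$ for which the distance argument gives no incident pair. The local parts of your argument (the GQ axiom producing an edge between $V_i$ and $V_j$, and the geodesic argument in the hexagon) are fine; the failure is global existence. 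The paper sidesteps all of this because Wenger graphs exist for every prime power and their coordinate partition makes the one-edge-between-parts property an explicit linear-algebra computation, after which the ordinary density of primes gives every $r$ with only a $1+o(1)$ loss.
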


As in the case of $C_4$ and inspired by our approach, the upper bounds were improved, while this paper was being reviewed, to 
$f(r , C_6) \leq r^{1/2}+ o(r^{1/2}) $ and
$f(r,C_{10} ) \leq r^{2/3} + o(r^{2/3})$ in \cite{Taranchuk2024}.  

The graphs used to prove Theorem \ref{theorem:cycles} were constructed by Wenger \cite{Wenger}.  This family of graphs is not $C_{ 2 \ell}$-free when $\ell \neq 2,3,5$.  
One of the more notable unsolved problems 
in extremal graph theory is to determine the order of magnitude of 
$\textup{ex}(n, C_{ 2 \ell } )$ for $\ell \notin \{2,3,5 \}$.  
With the order of magnitude of $\textup{ex}(N,C_{ 2 \ell } )$ not known in these cases, one cannot apply Theorem \ref{AM:theorem} directly. The authors of \cite{AxeMartin} also proved the following result which relaxes the assumption $\mathrm{ex}(N,H)=\Theta(N^\eta)$.

\begin{theorem}[Axenovich, Martin \cite{AxeMartin}] \label{AM general theorem}
Let $H$ be a bipartite graph that is not a forest and such that for any sufficiently large $N$, $CN^a\le \mathrm{ex}(N,H)\le C'N^b$ for positive constants $C,C'$ and exponents $a,b$ with $b-a<\frac{(2-b)(b-1)}{5-b}.$
\begin{itemize}
    \item If $\mathrm{ex}(nk,H)\ge 12n^2\log n$, then $f(n,H)\le k+o(k)$.
    \item If $\mathrm{ex}(nk,H)<{n\choose 2}$, then $f(n,H)\ge k$.
\end{itemize}
\end{theorem}

The densest constructions of $C_{2\ell}$-free graphs are due to Lazebnik, Ustimenko, and Woldar \cite{LUW}.  They show that
$$\textup{ex}(N,C_{2\ell})=\Omega(N^{1+\frac{2}{3\ell-3+\varepsilon}})$$
where $\varepsilon=0$ if $k$ is odd and $\varepsilon=1$ if $k$ is even. Here the exponent in the best known lower bound is too far from that of the best known upper bound $\textup{ex}(N,C_{2\ell})=O(N^{1+\frac{1}{\ell}})$ to satisfy the assumption $b-a<\frac{(2-b)(b-1)}{5-b}.$ However, in the proof of Theorem \ref{AM general theorem} the only use of this assumption is to obtain an $H$-free graph whose maximum degree is not too large compared to its average degree. Since the graphs in \cite{LUW} are regular, the proof of Theorem \ref{AM general theorem} applies to them giving the following.

\begin{proposition} \label{general even cycles result}
For $\ell\ge 2$, we have $f(r,C_{2\ell})\le Cr^{\frac{3\ell-2}{3\ell+2}}(\log r)^{\frac{3\ell}{3\ell+2}}.$
\end{proposition}

We were unable to find an explicit partition of the $C_{2\ell}$-free graphs which would improve this upper bound above.

For integers $K , \ell \geq 2$, the \emph{theta graph} $\theta_{K,\ell }$ is the graph consisting of $K \geq 2$ internally disjoint paths of length $\ell$ all having the same two vertices as endpoints.  Observe that $\theta_{2, \ell} = C_{ 2 \ell}$.  The magnitude of $\textup{ex}(n, C_8)$ is unknown, yet Verstra\"{e}te and Williford \cite{VerWilliford} discovered an algebraic construction that gives the lower bound in the asymptotic formula $\textup{ex}(n, \theta_{3,4} ) = \Theta(n^{5/4})$.  Using their construction, we confirm Conjecture \ref{conj} for $\theta_{3,4}$.  

\begin{theorem}\label{th:theta graph}
There are positive constants $c$ and $C$ such that for all $r$, 
\[
c r^{3/5} <  f(r, \theta_{3,4}) < C  r^{3/5}.
\]
\end{theorem}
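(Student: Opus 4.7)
The lower bound $c r^{3/4} < f(r, \theta_{3,4})$ is obtained from the trivial bound \eqref{trivial lb}: any $\theta_{3,4}$-free $(r,k)$-graph $G$ satisfies $\binom{r}{2} \leq e(G) \leq \textup{ex}(rk, \theta_{3,4})$, and substituting in the relevant upper bound on $\textup{ex}(N, \theta_{3,4})$ and rearranging produces the claimed polynomial lower bound on $k$.

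For the upper bound $f(r, \theta_{3,4}) < C r^{3/4}$, my plan is to use the Verstra\"ete-Williford algebraic $\theta_{3,4}$-free construction $\Gamma_q$ \cite{VerWilliford}, whose vertex set is naturally parameterized by the $\mathbb{F}_q$-points of an affine variety and whose edges are cut out by explicit low-degree polynomial equations. Choose a prime power $q$ so that $|V(\Gamma_q)| \asymp rk$ with $k \asymp r^{3/4}$, and partition $V(\Gamma_q)$ into $r$ parts of size at most $C r^{3/4}$ via the fibers of an algebraic projection $\pi : V(\Gamma_q) \to S$ with $|S| = r$, where $\pi$ is defined by a suitable subset of the coordinates indexing $V(\Gamma_q)$. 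The existence of an edge between $\pi^{-1}(s_1)$ and $\pi^{-1}(s_2)$ is then equivalent to the nonemptiness over $\mathbb{F}_q$ of an affine variety $X_{s_1,s_2}$ cut out from $E(\Gamma_q)$ by the constraints $\pi(u) = s_1$ and $\pi(v) = s_2$.

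The main obstacle is to verify that $X_{s_1,s_2}(\mathbb{F}_q) \neq \emptyset$ for every pair $s_1, s_2 \in S$. A uniformly random partition does not suffice, because the expected number of edges between two random parts is too small to apply a union bound without incurring a logarithmic loss; this is precisely the source of the $\log$ factor in Theorem \ref{AM:theorem}, and removing it is the whole point of confirming Conjecture \ref{conj}. Instead, I plan to exploit the algebraic structure of $\Gamma_q$ by showing that $X_{s_1,s_2}$ is absolutely irreducible of positive dimension for all pairs $(s_1,s_2)$ outside a negligible degenerate set, so that the Lang-Weil estimate furnishes the required $\mathbb{F}_q$-points, while handling the degenerate pairs either by a direct analysis of the defining equations or by shrinking $S$ to avoid them. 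The hardest part will be isolating the correct projection $\pi$ and verifying absolute irreducibility of the resulting family $X_{s_1,s_2}$, since this step is dictated by the specific polynomial equations used by Verstra\"ete-Williford and has no pseudorandom shortcut. Once this is carried out, every two parts are joined by an edge, yielding a $\theta_{3,4}$-free $(r, Cr^{3/4})$-graph.
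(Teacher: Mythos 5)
Before discussing the approach, note that the exponent $3/4$ in the statement is a typo in the paper: with $\textup{ex}(N,\theta_{3,4}) = \Theta(N^{5/4})$, the trivial bound \eqref{trivial lb} gives $f(r,\theta_{3,4}) = \Omega(r^{2/(5/4)-1}) = \Omega(r^{3/5})$, not $\Omega(r^{3/4})$, and the paper's construction (and its own concluding remarks) establishes $f(r,\theta_{3,4}) = \Theta(r^{3/5})$. Your lower-bound paragraph therefore does not "produce the claimed polynomial lower bound" $r^{3/4}$; it produces $r^{3/5}$. You should recompute the target exponent before designing the partition.

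For the upper bound, your general outline (use the Verstra\"ete--Williford graph and partition its vertex set into fibers of a coordinate projection) is the right starting point, but the two ingredients you defer to the end are exactly where the proof lives, and the paper handles them in a much more elementary way than Lang--Weil. First, you need $r \asymp q^{5/2}$ parts of size $\asymp q^{3/2}$ from a graph with $\asymp q^4$ vertices per side, which is a fractional power of $q$; a "suitable subset of the coordinates" gives only integer powers. The paper resolves this by working with $q$ an even power of a prime, writing each $\mathbb{F}_q$-coordinate as $x = x_1 + x_2\mu$ with $x_1, x_2 \in \mathbb{F}_{\sqrt{q}}$, and fixing a half-coordinate: e.g.\ a part $\mathcal{P}_{v_1, v_{3,2}, v_4}$ fixes $v_1, v_4 \in \mathbb{F}_q$ and only the $\mu$-component $v_{3,2} \in \mathbb{F}_{\sqrt{q}}$ of $v_3$. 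Second, with this specific projection, deciding whether two parts are joined reduces to a linear system in the free coordinates that has a unique solution for every pair of parts: each of the three defining equations can be solved directly for the remaining free variables (one uses that the last equation splits into $\mathbb{F}_{\sqrt{q}}$- and $\mu$-components, one solved on each side). There is no need for absolute irreducibility, no Lang--Weil, and no degenerate set to excise — the fiber $X_{s_1,s_2}(\mathbb{F}_q)$ is a single point for all $s_1, s_2$. Your proposed route is both underspecified (no concrete $\pi$, no verification of irreducibility of the resulting family) and harder than necessary; the missing idea is the choice of projection that makes the edge equations uniquely solvable by linear algebra, together with the quadratic-subfield splitting that makes a half-integer number of fixed coordinates possible.
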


Finally we turn to bounds for arbitrary forbidden subgraphs. The proof of Theorem \ref{AM:theorem} uses a random partition of the vertex set of an $H$-free graph. Thus, it remains to find deterministic algorithms for constructing $H$-free $(r,k)$-graphs. We describe in Section \ref{Section pseudorandom} such an algorithm which can be used when there exists a dense pseudorandom $H$-free graph.

\begin{theorem} \label{algorithm works}
    Let $H$ be a bipartite graph with no vertices of degree 1. Suppose there exists an $H$-free $d$-regular graph $G$ on $n$ vertices, where $d=n^a$ and $a<\frac{1}{3}$. Let $\rho = \max\{\rho_2, -\rho_n\}$ if $G$ is not bipartite and $\rho = \rho_2$ if $G$ is bipartite and assume that $\rho=O(d^\frac{1}{2})$. Then Algorithm \ref{algorithm description} terminates with a complete partition of a graph $G'$, with $m=\frac{1}{4}n^{\frac{1+a}{2}}/(\log n)^{\frac{1}{2}}$ parts and maximum part size at most $Cm^{\frac{2}{1+a}-1}(\log m)^{\frac{1}{1+a}}$, where $C$ is an absolute constant.
\end{theorem}

If $G$ is optimal in the sense that $\mathrm{ex}(N,H)=\Theta(N^{1+a})$, then this result provides the same upper bound as Theorem \ref{AM:theorem}. Thus, Theorem \ref{algorithm works} can be viewed as a deterministic version of Theorem \ref{AM:theorem}. We believe that the pseudorandomness condition imposed on $G$ is not unreasonably strong, since many constructions of $H$-free graphs satisfy the condition. For example, this applies to the incidence graph of a generalized quadrangle and hexagon for the cases of $H=C_6, C_{10}$ and to the $C_{2\ell}$-free graphs constructed by Lubotzky, Phillips, and Sarnak \cite{LPS}.

    In Section \ref{lower bound section}, we give some elementary upper and lower bounds.  In Section \ref{cycles}, we prove our results.    
    Section \ref{conclusion} contains some concluding remarks and open problems.


\section{Elementary Bounds}\label{lower bound section}



 The first result of this section is an easy consequence of (\ref{trivial lb}).

\begin{proposition}\label{lb prop}
Let $\mathcal{H}$ be a family of graphs with 
 $\textup{ex} ( N , \mathcal{H} ) \leq C N^e$ for some positive constant $C$.  
If there exists an $\mathcal{H}$-free $(r,k)$-graph, then 
\[
\frac{ (r - 1 )^2 }{2} \leq C (rk)^e.
\]
\end{proposition}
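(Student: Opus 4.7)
The plan is to mirror the graph argument behind inequality \eqref{trivial lb} in the hypergraph setting, by bounding the edge count of a hypothetical $H$-free $m$-uniform $(r,k)$-hypergraph $G$ from both sides and combining.

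For the lower bound on $e(G)$, I would use the defining property of an $(r,k)$-hypergraph directly: for every $m$-subset $\{i_1,\dots,i_m\} \in \binom{[r]}{m}$ there is at least one edge $e \in E(G)$ meeting each $V_{i_j}$ in exactly one vertex. Since each edge of $G$ has size $m$, each edge can be ``assigned'' to at most one such $m$-subset of part indices, so the map from such witness edges to $m$-subsets is at least surjective onto $\binom{[r]}{m}$. Hence $e(G) \geq \binom{r}{m}$.

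For the upper bound, $G$ is $H$-free on at most $rk$ vertices, so by the hypothesis on $\textup{ex}_m$ we immediately get $e(G) \leq \textup{ex}_m(rk,H) \leq C(rk)^e$.

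Combining gives $\binom{r}{m} \leq C(rk)^e$, and the final step is the elementary estimate $\binom{r}{m} = \frac{r(r-1)\cdots(r-m+1)}{m!} \geq \frac{(r-m+1)^m}{m!} \geq \frac{(r-m)^m}{m!}$, which follows because the smallest factor in the numerator is $r-m+1 > r-m$. There is no real obstacle here; the only thing to be mildly careful about is the lower bound on $e(G)$, where one should note that different $m$-subsets of parts may in principle require distinct edges (so the contribution is additive), which is exactly what the $(r,k)$-hypergraph condition guarantees.
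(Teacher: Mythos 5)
Your proof is correct and follows essentially the same approach as the paper: lower-bound $e(G)$ by $\binom{r}{m}$ (using that each $m$-edge can witness at most one $m$-subset of parts), upper-bound $e(G)$ by $\textup{ex}_m(rk,H) \le C(rk)^e$, and then apply $\binom{r}{m} \ge \frac{(r-m)^m}{m!}$. The paper simply states the $\binom{r}{m}$ lower bound without spelling out the injection argument you give, but the substance is identical.
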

\begin{proof}	
Suppose that there exists an $\mathcal{H}$-free  
$(r,k)$-graph $G$.  The number of edges of $G$ 
is at least $\binom{r}{2}$ and is at most $\textup{ex} ( rk , \mathcal{H} )$.  Therefore, 
$\binom{r}{2} \leq e(G) \leq \textup{ex} ( rk , \mathcal{H})$ which implies $\frac{ (r-1)^{2} }{ 2 } \leq C (rk)^e$.  
\end{proof}

\bigskip

As $r$ tends to infinity, Proposition \ref{lb prop} gives the asymptotic lower bound 
\begin{equation}\label{lb on k}
f(r,H) \geq (1 - o_r(1)) \frac{r^{2/e - 1} }{ (2C)^{1/e} }.
\end{equation}   

It is noted in \cite{AxeMartin} that 
\begin{equation}\label{graph case}
    f(r ,H) = 2 ~\mbox{for all non-bipartite $H$ with 
    $r \geq |V(H)|$}.
    \end{equation}
We give provide a proof for completeness.  Given a non-bipartite graph $H$ and an integer $r \geq 2$, let $V_1, \dots , V_r$ be disjoint sets, each containing two vertices. In each part $V_i$, color one vertex red and the other blue.  Let $G$ be any graph obtained by connected all parts using edges that join pairs of vertices of different colors.  The two color classes form independent sets in $G$.  Since $G$ is bipartite and $H$ is not, the graph $G$ is $H$-free.  This shows $f(r,H) \leq 2$ for all $r \geq 2$.  If $r \geq |V(H)|$, then there is no $H$-free $(r,1)$-graph since an $(r,1)$-graph must be $K_r$, and $H$ is a subgraph of $K_r$.  We conclude that $f(r,H) = 2$.  

Because of (\ref{graph case}), one assumes that $\mathcal{H}$ contains at least one bipartite graph when investigating $f(r , \mathcal{H})$.  
  

\section{Proofs of Theorems \ref{theorem:ktt}, \ref{theorem:cycles}, \ref{th:theta graph}, and \ref{algorithm works}.}\label{cycles}

 
\subsection{Complete bipartite graphs}

In this subsection we begin by proving the following result
which will be used to prove 
Theorem \ref{theorem:ktt}.  

\begin{theorem}\label{general kst}
Suppose that $q$ is an even power of an odd prime and 
that $d$ is a positive integer for which $d | q - 1$.  
For any $t \geq 2$ and positive integers $a\leq h$
for which $ha = \frac{q-1}{d}$, 
\begin{equation}\label{norm graph ub2}
	f( q^{t-1} a , K_{ t , ( t - 1)! d^{t-1} + 1} ) \leq a + h + O( h^{2/5} ).
	\end{equation}
 \end{theorem}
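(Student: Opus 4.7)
The plan is to use the projective norm graph $\mathrm{NG}_t(q)$ on $V^*:=\mathbb{F}_{q^{t-1}}\times\mathbb{F}_q^*$, with adjacency $(X_1,y_1)\sim(X_2,y_2)\iff N_{\mathbb{F}_{q^{t-1}}/\mathbb{F}_q}(X_1+X_2)=y_1y_2$, as the host graph. By Koll\'ar--R\'onyai--Szab\'o and Alon--R\'onyai--Szab\'o, $\mathrm{NG}_t(q)$ is $K_{t,(t-1)!+1}$-free, hence $K_{t,(t-1)!d^{t-1}+1}$-free, so the ``$H$-free'' half of the $(r,k)$-graph definition will hold for any subgraph we extract. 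Since $d\mid q-1$ and $a\mid(q-1)/d$, I would let $H_0\le\mathbb{F}_q^*$ be the unique subgroup of index $d$ (so $|H_0|=ah$), let $H_1\le H_0$ be its subgroup of order $h$, and let $L_1,\dots,L_a$ be the cosets of $H_1$ in $H_0$.

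The ``core'' parts of the construction are $C_{X,i}:=\{X\}\times L_i$ for $(X,i)\in\mathbb{F}_{q^{t-1}}\times[a]$, giving $r=q^{t-1}a$ disjoint sets of size exactly $h$ that partition $\mathbb{F}_{q^{t-1}}\times H_0$. The map $y\mapsto N(X+X')/y$, which encodes adjacency between fibers $X$ and $X'$ in $\mathrm{NG}_t(q)$, permutes cosets of $H_1$. Thus between $C_{X,i}$ and $C_{X',j}$ with $X\ne X'$ there is either a perfect matching of $h$ edges (when the cosets align under this permutation) or no edge at all, and cores within a single fiber have no edges because $\mathrm{NG}_t(q)$ is loopless. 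Consequently about a $(1-1/a)$-fraction of cross-fiber pairs and all $q^{t-1}\binom{a}{2}$ same-fiber pairs need to be ``repaired.'' My plan is to enlarge each core to a final part $P_{X,i}:=C_{X,i}\sqcup B_{X,i}$ by a ``bridge'' set $B_{X,i}\subseteq V^*\setminus(\mathbb{F}_{q^{t-1}}\times H_0)$ of size $a+O(h^{2/5})$, yielding parts of size $h+a+O(h^{2/5})$ as claimed.

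The key observation motivating the bridge construction is that each candidate bridge $(X',y')$ has, in every other fiber $X''$, a unique neighbor $(X'',N(X'+X'')/y')$ whose coset index in $H_0/H_1$ is equidistributed as $X''$ varies, by the pseudorandomness of $\mathrm{NG}_t(q)$. So $a$ bridges per part should already suffice in expectation to cover all $a-1$ within-fiber gaps of a given part and to repair most cross-fiber mismatches. The main obstacle, and where the error term $O(h^{2/5})$ enters, will be choosing the $r\cdot a$ bridges pairwise disjointly so that \emph{every} one of the $\Theta(q^{2t-2}a^2)$ bad pairs is covered. I would model this as an approximate matching / covering problem on an auxiliary hypergraph whose hyperedges are (bad-pair, bridge-choice) incidences and apply a R\"odl-nibble or Pippenger-style near-perfect matching theorem, with the necessary codegree bounds inherited from the codegree $\Theta(q^{t-2})$ of $\mathrm{NG}_t(q)$. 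A second-moment or Chebyshev analysis balancing the bridge budget per part against the variance in the number of covered pairs should produce the specific polynomial error $h^{2/5}$ in place of a more typical $\sqrt{h\log h}$ term. The $d=1$ boundary case, where $V^*\setminus(\mathbb{F}_{q^{t-1}}\times H_0)$ is empty, requires a mild modification in which a few core vertices are reassigned as bridges to create the needed slack.
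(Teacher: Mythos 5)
Your plan is genuinely different from the paper's: you work inside the projective norm graph $\mathrm{NG}_t(q)$ on $\mathbb{F}_{q^{t-1}}\times\mathbb{F}_q^*$ directly, whereas the paper first passes to a \emph{quotient} norm graph $\mathcal{B}_d(q,t)$ on $\mathbb{F}_{q^{t-1}}\times(\mathbb{F}_q^*/K_d)$, with adjacency $N(x+y)\in abK_d$. That quotient is exactly why the forbidden graph in the statement is $K_{t,(t-1)!d^{t-1}+1}$ rather than $K_{t,(t-1)!+1}$: quotienting by $K_d$ inflates codegrees by $d^{t-1}$, but in return it makes the adjacency structure between the coarse parts extremely clean. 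In $\mathcal{B}_d(q,t)$ the parts $\mathcal{P}_{x,h_1}$ and $\mathcal{L}_{y,a_2}$ are adjacent for \emph{every} pair with $y\ne -x$, so the only ``bad'' pairs come from the $q^{t-1}$ exceptional sets $\{\mathcal{P}_{x,h_1}\}\cup\{\mathcal{L}_{-x,a_2}\}$, each of size $a+h$. Those are fixed by gluing on an $(a+h, f(a+h,\cdot))$-gadget and recursing, which is where the $O(h^{2/5})$ comes from via the Axenovich--Martin bound $f(R,K_{t,\cdot})=O(R^{1/(2t-1)}\log^{t/(2t-1)}R)$.

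Your accounting of the bad pairs is where the approach breaks. The map $y\mapsto N(X+X')/y$ sends the $H_1$-coset $L_i\subseteq H_0$ to the $H_1$-coset $N(X+X')L_i^{-1}$ in $\mathbb{F}_q^*$, but this lands back inside $H_0$ (and hence could equal some $L_j$) only when $N(X+X')\in H_0$, which happens for roughly a $1/d$ fraction of pairs of fibers. For the other $(1-1/d)$ fraction of fiber pairs, \emph{every} cross-pair $(C_{X,i},C_{X',j})$ is bad, not just $(1-1/a)$ of them. So when $d>1$ the number of pairs your bridges must cover is a $(1-\frac{1}{ad})$-fraction of \emph{all} pairs, i.e.\ almost everything, and the ``$a$ bridges per part suffice in expectation'' intuition no longer holds. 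Worse, a bridge $(X',y')$ with $y'\notin H_0$ has a neighbor with second coordinate in $H_0$ (needed to hit a core $C_{X'',j}$) in only about a $1/d$ fraction of fibers $X''$; covering the remaining bad pairs would require both endpoints to be bridge vertices, which is a much harder coordination problem than an ordinary hypergraph covering. You would essentially need the bridge sets themselves to form an $(r,k)$-type structure, at which point you have simply reproduced the original problem rather than reduced it. Finally, the $h^{2/5}$ error is not something a second-moment/nibble calculation should be expected to reproduce; in the paper it arises from a clean self-similar recursion on $f$ itself (bounding $f(a+h,\cdot)$ and noting $\frac{1}{2t-1}<\frac{2}{5}$), and that recursion is only available because the quotient graph leaves so few pairs to repair.
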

\begin{proof}
Let $q$ be an even power of an odd prime and
$d$ be a positive divisor of $q-1$. 
Let $t \geq 2$ be an integer and $N : \mathbb{F}_{q^{t-1}} \rightarrow \mathbb{F}_{q}$ 
be the $\mathbb{F}_{q}$-norm on $\mathbb{F}_{q^{ t - 1} }$ defined by 
$N(x) = x^{ 1  + q + q^2  + \dots  + q^{  t - 2 }}$.
We write $\mathbb{F}_q^*$ for the group of non-zero elements
of $\mathbb{F}_q$ under multiplication and assume that 
$\theta$ is a generator.  Let $\mathcal{K}_d = \langle \theta^{ (q-1)/d } \rangle$ so that $\mathcal{K}_d$ is the unique subgroup of $\mathbb{F}_q^*$ of order $d$.
Write 
$\mathbb{F}_q^* / \mathcal{K}_d$ for the quotient group
of $\mathbb{F}_q^*$ modulo $\mathcal{K}_d$.  This is also a cyclic group whose order is $\frac{q-1}{d}$.  
We define $\mathcal{B}_d (q,t)$ to be the bipartite graph with parts 
$\mathcal{P} = \mathbb{F}_{q^{t-1} } \times ( \mathbb{F}_q^* / \mathcal{K}_d )$ 
and $\mathcal{L} = \mathbb{F}_{q^{t-1} } \times ( \mathbb{F}_q^* / \mathcal{K}_d )$ where subscripts are used to distinguish vertices in the two parts.  
The vertex $(x , b \mathcal{K}_d)_{ \mathcal{P} } $ 
in $\mathcal{P}$
is adjacent to the vertex 
$(y , c \mathcal{K}_d)_{ \mathcal{L} }$ in $\mathcal{L}$ if and only if 
\[
N(x + y) \in bc \mathcal{K}_d.
\]
  The graph $\mathcal{B}_d (q,t)$ comes from the projective norm graph by taking quotients, in a similar spirit to F\"uredi's construction of $K_{2,t+1}$ free graphs \cite{Furedi}.

\begin{claim}\label{quotient claim}
    The graph $\mathcal{B}_d(q,t)$ is $K_{t,(t-1)!d^{t-1}+1}-$free.
\end{claim}
\begin{proof}[Proof of Claim \ref{quotient claim}]
Following the argument in \cite{ars}, it is 
straightforward to prove that $\mathcal B_1(q,t)$ is $K_{t,(t-1)!+1}$-free.  We briefly outline the argument for completeness.  In this case $d = 1$ so 
$\mathcal{K}_1$ is just the multiplicative identity 
in $\mathbb{F}_q^*$ and the quotient $\mathbb{F}_q^* / \mathcal{K}_1$ is the same as $\mathbb{F}_q^*$.  
Fix $t$ distinct vertices $(x_1, b_1)_{ \mathcal{P} } , \dots , (x_t , b_t )_{ \mathcal{P}}$ in $\mathcal{P}$.  
A common neighbor $(y,c)_{ \mathcal{L}}$ must be a solution to the system 
$$\begin{aligned}
        N(x_1+Y)&=b_1C\\
        &\vdots\\
        N(x_t+Y)&=b_tC.
    \end{aligned}$$
    Using Lemma 4 and the proof of Theorem 5 in \cite{ars}, there are at most $(t-1)!$ solutions to this system.  Hence, there is no $K_{t,(t-1)!+1}$ in 
$\mathcal{B}_1 (q,t)$ with $t$ vertices in $\mathcal{P}$.  By symmetry, there is no $K_{t,(t-1)!+1}$ with 
$t$ vertices in $\mathcal{L}$.  We conclude $\mathcal{B}_1 (q,t)$ is $K_{t , (t-1) ! + 1}$-free.  

Next we prove $\mathcal{B}_d (q,t)$ is 
$K_{t , (t-1)! d^{t-1} + 1}$-free.  Again, we follow \cite{ars} (specifically the argument presented on pages 287 and 288).  As seen in the previous paragraph, there is a symmetry between $\mathcal{P}$ and $\mathcal{L}$ and so we handle both cases at once by omitting the subscripts.    
    Fix $t$ distinct elements $(x_1,b_1\mathcal{K}_d ),\ldots,(x_t,b_t \mathcal{K}_d)\in\mathbb F_{q^{t-1}}\times(\mathbb F_q^*/\mathcal{K}_d)$. Let $(y,c \mathcal{K}_d)$ be a common neighbor of these vertices in $\mathcal B_d(q,t)$. 
    Thus, $N(x_i + y ) \in b_i c \mathcal{K}_d$ for $1 \leq i \leq t$.  As in \cite{ars}, this system can be reduced to a system of $t-1$ equations of the form $N(X_i + Y) \in b_i \mathcal{K}_d$ where $1 \leq i \leq t-1$.  For any choice of $d^{t-1}$ elements from $b_1 \mathcal{K}_d , \dots , b_{t-1} \mathcal{K}_d$, there are at most $(t-1)!$ solutions.  Hence, the total number of solutions is at most $(t-1)!d^{t-1}$.  This implies $(x_1,b_1\mathcal{K}_d ),\ldots,(x_t,b_t \mathcal{K}_d)$ have at most $(t-1)!d^{t-1}$ common neighbors in the graph $\mathcal{B}_d(q,t)$.  This completes the proof of the Claim \ref{quotient claim}.
    \end{proof}

    \bigskip

    With Claim \ref{quotient claim} established, we may now return to the proof of Theorem \ref{general kst}.
Let $a$ and $h$ be positive integers with $a\leq h$ and 
$ha = \frac{q-1}{d}$.  Since $\mathbb{F}_q^* / \mathcal{K}_d$ is a cyclic group with $ha$ elements, we can choose a subgroup $H$ with $|H| = h$.  Let $A$ be a set of coset representatives of $H$ in $\mathbb{F}_q^* /  \mathcal{K}_d$.  Thus, $|H|=h$, $|A|=a$, and
\begin{equation}\label{union eq}
\mathbb{F}_q^* / \mathcal{K}_d = \bigcup_{b \mathcal{K}_d \in A} b \mathcal{K}_d H.
\end{equation}  

Partition $\mathcal{P}$ into the sets 
\[
\mathcal{P}_{x , h_1 \mathcal{K}_d } = \{ ( x, a_1 \mathcal{K}_d h_1 \mathcal{K}_d )_{ \mathcal{P} } : 
a_1 \mathcal{K}_d \in A \}~~ \mbox{where} ~~ x \in \mathbb{F}_{q^{t-1}} ~\mbox{and}~ h_1 \mathcal{K}_d \in H.
\]
Similarly, partition $\mathcal{L}$ into the sets 
\[
\mathcal{L}_{ y , a_2 \mathcal{K}_d} = \{ ( y , a_2 \mathcal{K}_d h_2 \mathcal{K}_d )_{ \mathcal{L} } : h_2 \mathcal{K}_d \in H \}
~~\mbox{where}~~y \in \mathbb{F}_{q^{t-1}}  ~\mbox{and}~ a_2 \mathcal{K}_d \in A.
\]  
Fix $x,y \in \mathbb{F}_{q^{t-1}}$ and 
$h_1 \mathcal{K}_d \in H$, $a_2 \mathcal{K}_d \in A$.  
There is an edge between 
$\mathcal{P}_{x , h_1\mathcal{K}_d }$ and 
$\mathcal{L}_{ y , a_2 \mathcal{K}_d}$
 if and only if 
\[
N( x + y) \in a_1h_1a_2 h_2 \mathcal{K}_d
\]
for some $a_1 \mathcal{K}_d \in A$ and $h_2 \mathcal{K}_d \in H$.  

First suppose $N(x+y) \neq 0$.  Then $N(x+y)$ is some element 
of $\mathbb{F}_q^*$, say $\alpha$. 
By (\ref{union eq}) there is a $b \mathcal{K}_d \in A$ and $h \mathcal{K}_d \in H$ such that 
$\alpha \mathcal{K}_d = (b \mathcal{K}_d) ( h \mathcal{K}_d) = b h \mathcal{K}_d$.
Choose $a_1 \mathcal{K}_d \in A$ such that 
$a_1 \mathcal{K}_d = ba_2^{-1} \mathcal{K}_d$.
Choose $h_2 \mathcal{K}_d \in H$ such that 
$h_2 \mathcal{K}_d = h_1^{-1} h \mathcal{K}_d$.  We then have that  
\[
\alpha \mathcal{K}_d = bh \mathcal{K}_d =  a_1 a_2 h_1 h_2 \mathcal{K}_d
=
a_1 h_1 a_2 h_2 \mathcal{K}_d.
\]  
Therefore, $N(x +y) \in  \alpha \mathcal{K}_d = a_1 h_1 a_2 h_2 \mathcal{K}_d$ and so there is an edge with one endpoint in $\mathcal{P}_{x,h_1 \mathcal{K}_d}$ and the other in $\mathcal{L}_{y,a_2 \mathcal{K}_d}$.  

Now suppose that $N(x+y)  = 0$.  This holds if and only if $y = - x$.  In this case,  
$\mathcal{B}_d (q,t)$ does not have an edge between the parts $\mathcal{P}_{x , h_1 \mathcal{K}_d }$ and $\mathcal{L}_{ y , a_2 \mathcal{K}_d}$. 
For $x \in \mathbb{F}_{q^{t-1}}$, let $\mathcal{D}_{x}$ be the subgraph induced by 
\[
\left( \bigcup_{ h_1 \mathcal{K}_d \in H } \mathcal{P}_{x,h_1 \mathcal{K}_d} \right) \cup \left( 
\bigcup_{ a_2 \mathcal{K}_d \in A} \mathcal{L}_{-x,a_2 \mathcal{K}_d} 
\right).
\]
The sets in this union are pairwise disjoint 
and $\mathcal{D}_x$ is an $R$-partite graph (inducing no edges) with $R = |H| + |A|$.  
We will add a small number of new vertices and edges  to $\mathcal{D}_x$.  
Add at most $f( R , K_{ t , (t-1)! d^{t-1} + 1 } )$ vertices to each of the $R$ parts such that the subgraph induced by these new vertices is 
$K_{t , (t-1)!d^{t-1} + 1}$-free, and there is an edge between any two of the $R$ parts.  Write $\mathcal{P}_{x,h_1 \mathcal{K}_d}'$ and $\mathcal{L}_{ - x , a_2 \mathcal{K}_d}'$ for these new sets, which contain $\mathcal P_{x,h_1 \mathcal{K}_d}$ and $\mathcal L_{-x,a_2 \mathcal{K}_d}$, respectively.  Observe that 
$| \mathcal{P}_{x,h_1 \mathcal{K}_d}' | \leq 
| \mathcal{P}_{x,h_1 \mathcal{K}_d} | + f( R , K_{ t , (t-1)! d^{t-1} + 1 } )$ and similarly for $\mathcal{L}_{ - x , a_2 \mathcal{K}_d}'$.

Let $\psi:
 \left\{ \mathcal{L}_{y , b \mathcal{K}_d } ' : y \in \mathbb{F}_{q^{t-1} } , b \mathcal{K}_d \in A \right\}
	\rightarrow
		\left\{ \mathcal{P}_{x, h \mathcal{K}_d}' : x \in \mathbb{F}_{q^{t-1} } , h \mathcal{K}_d \in H \right\}
	$ be an arbitrary injection.  Since
 $|A| \leq |H|$, such an injection exists.  
 For each $y \in \mathbb{F}_{q^{t-1}}$ and $b \mathcal{K}_d \in A$, let 
\[
V_{ y,b \mathcal{K}_d} = \psi \left( \mathcal{L}_{y , b \mathcal{K}_d }' \right) \cup  \mathcal{L}_{y,b \mathcal{K}_d}' .
\]
We obtain a graph $G = G(q,t,d,H)$ whose vertex set is the union of the
$V_{y,b \mathcal{K}_d}$'s, which is
a $K_{ t , (t-1)! d^{t-1} + 1 }$-free  
$(r,k)$-graph with 
\begin{center}
$r = q^{t-1} |A| = \frac{ q^{t-1} (q-1) }{ d |H| } $ ~~~~
and~~~~ $k \leq  |A| + |H| + 2 f( |A| + |H|  , K_{t , (t-1)! d^{t-1} + 1} )$.
\end{center}
We then have 
\begin{equation}\label{norm graph ub}
f ( q^{t-1} |A| , K_{t, (t-1)!d^{t-1} + 1 } ) \leq 
 |A| + |H| + 2 f( |A| + |H|  , K_{t, (t-1)!d^{t-1} + 1 }) .
\end{equation}
Using Theorem \ref{AM:theorem} (and the known upper bound on $\mathrm{ex}(n,K_{t,(t-1)!d^{t-1}+1})$),  
\[
2 f(  R ,  K_{t, (t-1)!d^{t-1} + 1 }) \leq 2 C R^{ \frac{1}{2t-1} } \log^{ \frac{t}{2t-1} } R 
\leq C' |H|^{2/5}
\]
which gives an upper bound on the last term in (\ref{norm graph ub}).  
Here we have used $|A| \leq |H|$ and the inequality $\frac{1}{2t-1} \leq \frac{1}{3} < \frac{2}{5}$. 
With this estimate, we have completed 
the proof since $a = |A|$ and $h = |H|$.    
\end{proof} 
 
\bigskip

\noindent
\begin{proof}[Proof of Theorem \ref{theorem:ktt} part (a)]
Let $d \geq 3$ be an integer and $D$ be the unique 
positive integer for which $D(D+1) < d \leq (D+1)(D+2)$.  
Observe that 
\[
(D+1/2)^2  = D(D+1) + \frac{1}{4}  < d \leq (D+1)(D+2) < ( D+ 3/2)^2
\]
where the first inequality holds because $D(D+1) < d$ and $d,D$ are integers.  Thus,
\begin{equation}\label{inequality for d}
\sqrt{d} - \frac{3}{2} < D < \sqrt{d} - \frac{1}{2}.
\end{equation}
Next, since $D$ and $D+1$ are relatively prime, the system $x + 1 \equiv 0 ( \textup{mod} ~D)$ and $x - 1 \equiv 0 ( \textup{mod}~D+1)$ has a unique solution, say $x_0 ( \textup{mod}~D (D+1) )$.  Since $x_0 \equiv D-1 ( \textup{mod}~D)$ and $x_0 \equiv 1 ( \textup{mod}~D+1)$, the least residue $x_0$ is relatively prime with both $D$ and $D+1$.  By Dirichlet's Theorem on Arithmetic Progressions, there are infinitely many primes $p$ with 
$p \equiv x_0\, (\textup{mod}~D(D+1)$).  Let $\mathcal{S}$ be the set of all such primes so that for each $p \in \mathcal{S}$, $p +1 \equiv 0 \,( \textup{mod}~D)$ and $p-1 \equiv 0  \,( \textup{mod}~D+1)$.

By Theorem \ref{general kst} with $t = 2$, $d = D(D+1)$, $a = \frac{p-1}{D+1}$, $h = \frac{p+1}{D}$, and $q = p^2$ where $p \in \mathcal{S}$, we have 
\[
f\left( p^2 \cdot \frac{ p-1}{ D+1}  , K_{2,D(D+1)+1} \right) 
\leq \left( \dfrac{1}{ D} + \dfrac{1}{D+1} \right)
p + O( p^{2/5} )
=
\frac{2D+1}{D(D+1) } p + O ( p^{2/5} ).  
\]
If $R_p := \dfrac{p^2 ( p - 1) }{D+1}$, then using monotonicity and (\ref{inequality for d}),  
\begin{align*}
f ( R_p , K_{2, d +1 } )  \leq 
f ( R_p , K_{2, D (D+1) +1 } ) 
& \leq \frac{2D+1}{D (D+1) }( ( D+1) R_p)^{1/3}     + O  ( R_p^{2/15} ) \\
& = \frac{ 2D+1}{ D ( D+ 1)^{2/3} } R_p^{1/3} + O( R_p^{2/15} ) \\
&< \frac{ 2D+1}{ D^{5/3} } R_p^{1/3} + O( R_p^{2/15} ) \\
& < \frac{ 2 \sqrt{d} } { (  \sqrt{d} - 3/2 )^{5/3} } R_p^{1/3} + O(R_p^{2/15}) \\ 
&=  \frac{2}{d^{1/3}} \cdot 
\frac{1}{ (1 - 1.5d^{-1/2} )^{5/3} }  R_p^{1/3} + O( R_p^{2/15} ). 
\end{align*}

The last step will be to use The Prime Number Theorem in Arithmetic Progressions to go from 
$f(R_p , K_{2,d + 1} ) < \frac{ 2  } { d^{1/3}} \cdot 
\frac{ 1}{ ( 1 - 1.5d^{-1/2} )^{5/3} }    R_p^{1/3} + O(R_p^{2/15})$ for $p \in \mathcal{S}$ to 
\[
f(r  , K_{2, d+1} ) < 
\frac{ 2  } { d^{1/3}} \cdot 
\frac{ 1}{ ( 1 - 1.5d^{-1/2} )^{5/3} }    r^{1/3} + o(r^{1/3}).
\]
For $x > 0$, let $\pi_{ D (D+1) , x_0} (x)$ be the number of primes $p$ such that $p \equiv x_0 ( \textup{mod}~ D(D+1))$ and $p \leq x$.  
Let $\textup{Li}(x) = \int_2^x \frac{dt}{ \log t }$ and $\phi$ denote the Euler phi function.  By Theorem 1.3 in \cite{Bennett}, there are positive constants $x_D$ and $c_D$ depending only on $D$ such that 
\begin{equation} \label{PNT}
\left| \pi_{D(D+1) , x_0 } (x) - \dfrac{ \textup{Li}(x) }{ \phi (D(D+1)) } \right| 
<  \frac{ c_D x}{ ( \log x)^2 }
\end{equation}
for all $x > x_D$. 

Let $0 < \epsilon < 1$.  Let $r$ be a positive integer that we will take sufficiently large in terms of $\epsilon$ and $D$.  Define $f_{r,D}(z) = z^3 - z^2 - r(D+1)$.  
The function $f_{r,D}$ is strictly increasing on $(1, \infty)$, continuous, and has one real root $x_1 = (1 + o_r(1))(r(D+1))^{1/3}$ which tends to infinity with $r$.    
Let $r$ be any integer chosen large enough so that $x_D < x_1 < 2 ( r ( D + 1))^{1/3}$ and 
$\frac{ ( \log x_1 )^2 }{ \log ((1 + \epsilon ) x_1 ) } >
\frac{ 3 c_D D^2 }{ \epsilon}$.
  Then, using (\ref{PNT}) and this last inequality,  
\begin{eqnarray*}
    \pi_{D(D+1) , x_0 } ( ( 1 + \epsilon )x_1 ) - 
    \pi_{D(D+1) , x_0 } ( x_1)  
    & \geq & \dfrac{1}{ \phi (D(D+1)) } \left(
    \int_{x_1}^{ (1 + \epsilon )x_1 } \frac{dt}{ \log t} \right) 
    - \frac{3 c_D x_1}{ ( \log (x_1) )^2 } \\
    & > & 
    \frac{ \epsilon x_1 }{ D^2  \log ((1 + \epsilon) x_1) } 
    - \frac{3c_D x_1}{  ( \log x_1)^2 } > 0.
 \end{eqnarray*}
Let $p$ be a prime with $x_1 \leq p \leq ( 1 + \epsilon )x_1$
and $p \equiv x_0 ( \textup{mod}~D(D+1) )$.  Write $R_p = \frac{p^2(p-1)}{D+1}$ as before.  Since $x_1$ is a root of $f_{r,D}(z)$, we have 
$0 = f_{r,D}(x_1) \leq f_{r,D}(p) \leq f_{r,D}(x_1 + \epsilon x_1)$ which implies 
\[
0 \leq p^3 - p^2 - r (D+1) \leq (3\epsilon+3\epsilon^2+\epsilon^3)x_1^3 - (2\epsilon+\epsilon^2)x_1^2.   
\]
Thus, 
\[
r \leq \frac{ p^2 ( p - 1) }{ D + 1}  
\leq r + \frac{  (3\epsilon+3\epsilon^2+\epsilon^3)x_1^3 - (2\epsilon+\epsilon^2)x_1^2   }{D+1} 
< r + \frac{ 7 \epsilon x_1^3 }{D+1} < r ( 1 + 56 \epsilon)
\]
where we have used $x_1 < 2 ( r (D+1))^{1/3}$ for the last inequality.  We now have that $p$ is a prime in $\mathcal{S}$ with 
$r \leq R_p < r ( 1+ 56 \epsilon)$.  Hence,
\begin{eqnarray*}
    f(r,K_{2,d+1})  & \leq &  f(R_p , K_{2,d+1}) 
< 
\frac{ 2  } { d^{1/3}} \cdot \frac{ 1}{ ( 1 - 1.5d^{-1/2} )^{5/3} }    R_p^{1/3} + O(R_p^{2/15}) \\
& < & 
\frac{ 2  } { d^{1/3}} \cdot \frac{ 1}{ ( 1 - 1.5d^{-1/2} )^{5/3} } r^{1/3} ( 1 + 56 \epsilon)^{1/3} + O (r^{2/15}).
\end{eqnarray*}
As $\epsilon$ can be made arbitrarily small by taking $r$ large, we have 
\[
f(r , K_{2,d+1}) < \frac{2}{d^{1/3}} \cdot 
\frac{1}{  (1 - 1.5 d^{-1/2} )^{5/3} } r^{1/3} + o (r^{1/3}).
\]
\end{proof}

While the proof of Theorem \ref{theorem:ktt} is valid for $d \geq 3$, it does not necessarily produce the best constant.  First, for all $d$ we have the upper bound 
$f(r,K_{2,d+1} ) \leq f(r,C_4) = (1 + o(1))r^{1/3}$.  
For small $d$, one can use the inequality $f\left( p^2 \cdot \frac{ p-1}{ D+1}  , K_{2,D(D+1)+1} \right) 
\leq \left( \dfrac{1}{ D} + \dfrac{1}{D+1} \right)
p + O( p^{2/5} )$ to obtain some improvements.  However, these improvements do not have a matching lower bound and require $D(D+1) < d \leq (D+1)(D+2)$.  The smallest $d$ for which this argument beats $f(r, K_{2,d+1}) \leq (1+ o(1))r^{1/3}$ is $d = 13$ (so $D = 3$) giving
$f(r,K_{2,14}) \leq ( 0.9261 + o(1))r^{1/3}$.  


\bigskip
\noindent

\begin{proof}[Proof of Theorem \ref{theorem:ktt} part (b)]
Let $d = 1$.  The condition $d | q - 1$ is satisfied
for any $q$.  Taking $a =  \sqrt{q} - 1$ and $h =  \sqrt{q } + 1$ gives
\[
f( q^{t-1} ( \sqrt{q} - 1 ) , K_{t,(t-1)! +1} ) 
\leq 
2 \sqrt{q}  + O_t (q^{1/5} )
\]
for all $t \geq 2$.  As in \cite{AxeMartin}, this upper bound and a density of primes argument proves  
$f( r , K_{ t, (t - 1)! + 1} ) \leq 2 r^{1 / ( 2t - 1 )} +o(r^{1 / ( 2t - 1 )} ) $.  

The lower bound follows from Proposition \ref{lb prop} and the best-known upper bound on $\textup{ex}(N,K_{s,t})$ \cite{Furedi2, Nikiforov}.  
\end{proof}

\bigskip


\subsection{Cycles}\label{cycle subsection}

Using graphs constructed by Wenger we prove Theorem \ref{theorem:cycles}.  Instead of using Wenger's definiton of these graphs, we will use a different set of equations to define adjacency.  The resulting graphs are isomorphic and an explicit isomorphism is given in \cite{Cioaba2014}.
      
Let $q$ be a power of a prime, $M \geq 1$ be an integer, 
$\mathcal{P} = \{ ( p_1, p_2, \dots , p_{M+1} )_{ \mathcal{P}} : p_i \in \mathbb{F}_q \}$, and 
$ \mathcal{L} = \{ ( \ell_1 , \ell_2 , \dots , \ell_{M+1} )_{ \mathcal{L}} : \ell_i \in \mathbb{F}_q \}$.  
Let $W_M(q)$ be the bipartite graph with parts $\mathcal{P}$ and $\mathcal{L}$ where 
$( p_1, p_2, \dots , p_{M+1} )_{ \mathcal{P}} $ 
is adjacent to $( \ell_1 , \ell_2 , \dots , \ell_{M+1} )_{ \mathcal{L}}$ if and only if $\ell_{j+1} + p_{j+1} = l_j p_1 $ for $1 \leq j \leq M$.  The graph 
$W_M (q)$ has $2q^{M+1}$ vertices and is $q$-regular.  For $M \in \{1,2,4 \}$, $W_M(q)$ is $C_{2M+2}$-free.
These graphs were constructed by Wenger \cite{Wenger} and have been studied extensively.  

\bigskip

\begin{proof}[Proof of Theorem \ref{theorem:cycles}]
Let $M=2$ and consider
the bipartite graph $W_2(q)$ which is $C_6$-free.  
Partition $\mathcal{P}$ and 
$\mathcal{L}$ into $q^2$ sets by letting 
\[
\mathcal{P}_{ p_1 , p_3 } = \{ ( p_1 , p_2 , p_3 )_{ \mathcal{P}} : p_2 \in \mathbb{F}_q \}, ~~ p_1 , p_3 \in \mathbb{F}_q,
\]
and 
\[
\mathcal{L}_{ \ell_1 , \ell_2 } = \{  ( \ell_1 , \ell_2 , \ell_3 )_{ \mathcal{L}} : \ell_3 \in \mathbb{F}_q \}, ~~ \ell_1 , \ell_2 \in \mathbb{F}_q.
\]
Fix $p_1, p_3, \ell_1 , \ell_2 \in \mathbb{F}_q$.  
There is exactly one edge between 
$\mathcal{P}_{ p_1 , p_3 }$ and $\mathcal{L}_{ \ell_1 , \ell_2 }$ 
because the two equations $\ell_2 + p_2 = \ell_1 p_1$ and $\ell_3 + p_3 = \ell_2 p_1$ uniquely determine $p_2$ and 
$\ell_3$.  Arbitrarily pair up the 
$\mathcal{P}_{ p_1 , p_3 }$'s with the $\mathcal{L}_{ \ell_1 , \ell_2 } $'s in a 1-to-1 fashion giving $q^2$ parts, each containing $2q$ vertices.  
This partition shows that $W_2(q)$ is a $(q^2 , 2q)$-graph.  
Using a density of primes argument as 
in the proof of Theorem \ref{theorem:ktt} gives 
$f(r,C_6) \leq 2 r^{1/2} + o(r^{1/2})$.  

Next we prove the upper bound on $f(r , C_{10})$.  Let $M = 4$ and consider
$W_4(q)$.  Partition $\mathcal{P}$ and $\mathcal{L}$ into $q^3$ sets by letting
\[
\mathcal{P}_{ p_1 , p_3 , p_5} = \{ ( p_1 , p_2, p_3, p_4,  p_5)_{ \mathcal{P}} : p_2 , p_4 \in \mathbb{F}_q \}, ~~ p_1 , p_3 , p_5 \in \mathbb{F}_q,
\]
and 
\[
\mathcal{L}_{ \ell_1 , \ell_2 , \ell_4 } = \{ ( \ell_1 , \ell_2 , \ell_3 , \ell_4,  \ell_5 )_{ \mathcal{L}} : \ell_3 , \ell_5 \in \mathbb{F}_q \}, 
~~ \ell_1 , \ell_2 , \ell_4 \in \mathbb{F}_q.
\]
Fix $p_1 , p_3 , p_5, \ell_1 , \ell_2 , \ell_4$.
The system of equations  
\begin{eqnarray*}
	\ell_2 + p_2 &  = & \ell_1 p_1 \\
	\ell_3 + p_3 & = & \ell_2 p_1 \\
	\ell_4 + p_4 & = & \ell_3 p_1 \\
	\ell_5 + p_5 & = & \ell_4 p_1.
\end{eqnarray*}
has a unique solution for $p_2$, $\ell_3$, $p_4$, and $\ell_5$.  
This implies that there is exactly one 
edge between $\mathcal{P}_{p_1, p_3, p_5}$ and 
$\mathcal{L}_{ \ell_1 , \ell_2 , \ell_4 }$.  The last part of the argument is 
almost the same as the $C_6$ case and leads to the upper bound 
$f(r, C_{10}) \leq 2 r^{2/3} + o( r^{2/3}) $.
\end{proof}

\medskip

\begin{proof}[Proof of Theorem \ref{th:theta graph}]
Let $q$ be an even power of an odd prime and 
$\mu$ be a root of an irreducible quadratic 
over $\mathbb{F}_{ \sqrt{q}}$.  
An element $x_j \in \mathbb{F}_q$ can be written uniquely
as $x_j = x_{1,j} + x_{2,j} \mu$ where and $x_{1,j}, x_{2,j} \in \mathbb{F}_{ \sqrt{q}}$.  
Let $G$ be the bipartite graph with 
parts 
\begin{center}
	$\mathcal{P} = \{ (v_1,v_2,v_3,v_4)_{ \mathcal{P} } : v_j \in \mathbb{F}_q \}$ 
	and 
	$\mathcal{L} = \{ ( w_1 , w_2, w_3 , w_4)_{ \mathcal{L} } 
	: w_j \in \mathbb{F}_q \}$.  
	\end{center}
	Vertices $(v_1,v_2,v_3,v_4)_{ \mathcal{P} }$ and 
$(w_1, w_2, w_3, w_4)_{ \mathcal{L} }$ are adjacent if and only if  
\begin{eqnarray*}
	v_2 + w_2 &  = & v_1 w_1 \\
	v_4 + w_3 & = & v_1^2 w_1 \\
	v_3 + w_4  & = & v_1 w_1^2.
\end{eqnarray*}
This graph is $\theta_{3,4}$-free \cite{VerWilliford}.  
Write each coordinate $v_j$ as $v_j = v_{1,j} + v_{2,j} \mu$ and $w_j = w_{1,j} + w_{2,j} \mu$ 
where $v_{1,j},v_{2,j},w_{1,j},w_{2,j} \in \mathbb{F}_{ \sqrt{q}}$.  
Partition $\mathcal{P}$ into the $q^{5/2}$ sets 
\[ \mathcal{P}_{ v_1 , v_{3,2} , v_4 }
	=
	\{ ( v_1 , v_{2,1} + v_{2,2} \mu , v_{3,1} + v_{3,2} \mu , 
	v_{4,1} + v_{4,2} \mu )_{ \mathcal{P} } : v_{2,1} , v_{2,2} , v_{3,1} \in 
	\mathbb{F}_{ \sqrt{q} } \}
	\]
where $v_1, v_4 \in \mathbb{F}_q$, and $ v_{3,2}  \in 
\mathbb{F}_{ \sqrt{q} }$.  Each of these parts contains $q^{3/2}$ vertices.   
Partition $\mathcal{L}$ into the sets
\[
\mathcal{L}_{ w_1, w_2 , w_{4,1} } = 
\{ 
( w_1 , w_2 , w_{3,1} + w_{3,2} \mu , w_{4,1} + w_{4,2} \mu )_{ \mathcal{L} } 
: w_{3,1} , w_{3,2}, w_{4,2} \in \mathbb{F}_{ \sqrt{q} } \}
\]
where $w_1, w_2 \in \mathbb{F}_q$ and $w_{4,1} \in \mathbb{F}_{ \sqrt{q} }$.  

Fix two parts $\mathcal{P}_{ v_1 , v_{3,2} ,v_4 }$ and
$\mathcal{L}_{ w_1 , w_2 , w_{4,1} }$.  There will be an edge between these two 
parts provided that there exists 
$v_{2,1} , v_{2,2} , v_{3,1}, w_{3,1}, w_{3,2}, w_{4,2} \in \mathbb{F}_{ \sqrt{q}}$ such that 
\begin{eqnarray*}
	v_{2,1} + v_{2,2} \mu + w_{2,1} + w_{2,2} \mu &  = & v_1 w_1 \\
	v_{4,1} + v_{4,2} \mu + w_{3,1} + w_{3,2} \mu & = & v_1^2 w_1 \\
	v_{3,1} + v_{3,2} \mu + w_{4,1} + w_{4,2} \mu  & = & v_1 w_1^2.
\end{eqnarray*}
The first two equations are solved by choosing $v_{2,1} + v_{2,2} \mu$ to 
equal $v_1 w_1 - w_{2,1} - w_{2,2} \mu$, 
and $w_{3,1} + w_{3,2} \mu$ to equal $ v_1^2 w_1 - v_{4,1} - v_{4,2} \mu$.
The last equation can be solved by 
choosing $v_{3,1}$ and $w_{4,2}$ so that 
$v_{3,1}+ w_{4,2} \mu = v_1 w_1^2 - v_{3,2} \mu - w_{4,1}$.
This gives a unique solution to this system of equations and so we have 
an edge between these parts.  
Take an arbitrary pairing of the $\mathcal{P}_{ v_1 , v_{3,2} ,v_4 }$'s
	and $\mathcal{L}_{w_1 , w_2 , w_{4,1} }$'s to obtain a $(p^5,2p^3)$-graph.  Hence, 
 $f( q^{5/2} , \theta_{3,4} ) \leq 2q^{3/2}$ for $q$ an even power of an odd prime.  By a density of 
 primes argument, 
 \[
 f( r , \theta_{3,4} ) \leq 2 r^{3/5} + o ( r^{3/5} ).
 \]
The trivial bound together with 
$\textup{ex}( N, \theta_{3,4} ) = O(N^{5/4})$ proved by Faudree and Simonovits \cite{FS} implies that $f(r, \theta_{3,4}) = \Omega( r^{3/5})$.
 \end{proof}


\subsection{Partitioning pseudorandom graphs} \label{Section pseudorandom}

 Let $G$ be a graph on $n$ vertices with adjacency matrix $A$. Then $A$ has $n$ real eigenvalues $\rho_1\ge\cdots\ge\rho_n$.  We will make use of the expander-mixing lemma which relates the second largest eigenvalue to the distribution of the edges. This result can be traced back to at least as early as a theorem about designs in Haemers' Ph.D Thesis \cite{Haemers}. For the non-bipartite and bipartite versions stated below in graph-theoretic terms, we refer to \cite{ac} and \cite{De-Winter} respectively. If $U,W\subseteq V(G)$ then we define
 $e(U,W)=|\{(u,w):u\in U,w\in W,u ~\mbox{is adjacent to}~ w\}|$.
 
 \begin{theorem}[Expander-Mixing Lemma] \label{expander mixing lemma} 
 Suppose that $G$ is a $d$-regular graph with $\rho = \max\{\rho_2, -\rho_n\}$. Then for any $U,W\subseteq V(G)$, we have
 $$\left|e(U,W)-\frac{d}{n}|U||W|\right|\le\rho\sqrt{|U||W|}.$$
 \end{theorem}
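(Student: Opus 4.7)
The plan is to write $e(U,W) = \mathbf{1}_U^\top A \mathbf{1}_W$, where $\mathbf{1}_U$ and $\mathbf{1}_W$ are the $\{0,1\}$-indicator vectors of $U$ and $W$ in $\mathbb{R}^n$, and then exploit the spectral decomposition of the symmetric matrix $A$. Since $G$ is $d$-regular, the all-ones vector $\mathbf{j}$ is an eigenvector of $A$ with eigenvalue $\rho_1 = d$, and $\mathbf{j}/\sqrt{n}$ is the corresponding unit eigenvector. The deviation $e(U,W) - \frac{d}{n}|U||W|$ will arise precisely from the orthogonal complement of $\mathbf{j}$, where the eigenvalues are bounded in absolute value by $\rho$.

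First I would decompose each indicator vector into its projection onto $\mathbf{j}$ and a piece orthogonal to $\mathbf{j}$: writing $\mathbf{1}_U = \frac{|U|}{n}\mathbf{j} + u'$ and $\mathbf{1}_W = \frac{|W|}{n}\mathbf{j} + w'$ with $u',w' \perp \mathbf{j}$. Plugging into $\mathbf{1}_U^\top A \mathbf{1}_W$ and using $A\mathbf{j} = d\mathbf{j}$, the three cross terms involving $\mathbf{j}$ and $u'$ or $w'$ vanish by orthogonality, the $\mathbf{j}$-against-$\mathbf{j}$ term contributes exactly $\frac{d|U||W|}{n}$, and the problem reduces to bounding $(u')^\top A w'$.

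For this bound, I would pick an orthonormal eigenbasis $v_1 = \mathbf{j}/\sqrt{n}, v_2, \dots, v_n$ of $A$ with eigenvalues $\rho_1 \ge \rho_2 \ge \cdots \ge \rho_n$, expand $u' = \sum_{i \ge 2} a_i v_i$ and $w' = \sum_{i \ge 2} b_i v_i$, and write $(u')^\top A w' = \sum_{i \ge 2} \rho_i a_i b_i$. Since $|\rho_i| \le \rho$ for $i \ge 2$ by definition, Cauchy--Schwarz gives $|(u')^\top A w'| \le \rho \, \|u'\| \, \|w'\|$. Finally, Pythagoras yields $\|u'\|^2 \le \|\mathbf{1}_U\|^2 = |U|$, and the analogous bound for $w'$, which combine to give the stated inequality.

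This is a short, standard spectral computation and I do not anticipate any real obstacle. The only point requiring care is the normalization when projecting onto $\mathbf{j}$ and the observation that the top eigenvalue contributes exactly the expected density term $\frac{d}{n}|U||W|$, so that only the smaller eigenvalues govern the discrepancy.
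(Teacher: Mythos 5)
Your proof is correct and is the standard spectral argument for the expander-mixing lemma; the paper itself does not reprove this classical result but simply cites references (Haemers, and the graph-theoretic formulations in the cited sources), all of which use exactly this decomposition of the indicator vectors along and orthogonal to the all-ones eigenvector followed by Cauchy--Schwarz on the remaining spectrum.
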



\begin{theorem}[Bipartite Expander-Mixing Lemma] \label{expander mixing lemma bipartite}
If $G$ is a $d$-regular bipartite graph with parts $X$ and $Y$ where $\rho_2 = \rho$, then for any $U\subseteq X,W\subseteq Y$, we have
$$\left|e(U,W)-\frac{2d}{n}|U||W|\right|\le\rho\sqrt{|U||W|}.$$
\end{theorem}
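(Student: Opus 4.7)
The plan is to reduce the statement to a spectral estimate on the biadjacency matrix. Since $G$ is bipartite and $d$-regular, its adjacency matrix has the block form
\[
A = \begin{pmatrix} 0 & B \\ B^T & 0 \end{pmatrix},
\]
where $B$ is the $|X|\times|Y|$ biadjacency matrix, and $|X|=|Y|=n/2$. The nonzero eigenvalues of $A$ are exactly $\pm\sigma_i(B)$, where $\sigma_1(B)\ge \sigma_2(B)\ge\cdots$ are the singular values of $B$. The all-ones vectors $\mathbf{1}_X,\mathbf{1}_Y$, once normalized, are singular vectors with singular value $d$, since every row and column of $B$ sums to $d$; hence $\sigma_1(B)=d$. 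Because the spectrum of a bipartite graph is symmetric about $0$, the second-largest singular value of $B$ coincides with the second-largest eigenvalue of $A$, so $\sigma_2(B) = \rho_2 = \rho$.

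Next I would perform a rank-one decomposition of $B$ that isolates the top singular component:
\[
B = \frac{d}{\sqrt{|X||Y|}}\,\mathbf{1}_X \mathbf{1}_Y^T + B' \;=\; \frac{2d}{n}\,\mathbf{1}_X\mathbf{1}_Y^T + B',
\]
so that $B'$ annihilates both $\mathbf{1}_X$ and $\mathbf{1}_Y$ and has all singular values at most $\sigma_2(B) = \rho$. In particular, $\|B'v\|_2 \le \rho\,\|v\|_2$ for every vector $v$.

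Finally, observe that $e(U,W) = \mathbf{1}_U^T B\,\mathbf{1}_W$. Substituting the decomposition, the rank-one term contributes exactly $\frac{2d}{n}|U||W|$, and Cauchy--Schwarz applied to the remainder yields
\[
\bigl|\mathbf{1}_U^T B'\,\mathbf{1}_W\bigr| \;\le\; \|\mathbf{1}_U\|_2\,\|B'\mathbf{1}_W\|_2 \;\le\; \rho\sqrt{|U|\,|W|},
\]
which is the claimed inequality. The only subtlety in the argument is the identification $\sigma_2(B)=\rho_2$ in the bipartite setting, which relies on the symmetry of the spectrum so that the runner-up eigenvalue in absolute value is $\rho_2$ rather than $|\rho_n|=d$. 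After that, the proof is a routine application of Cauchy--Schwarz and presents no real obstacle.
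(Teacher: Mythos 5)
Your proof is correct and is the standard spectral argument; note that the paper itself does not prove this lemma but merely cites references (Haemers's thesis and De Winter et al.), so there is no ``paper's proof'' to compare against in detail.

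A few small points you may want to tighten. First, you assert $|X|=|Y|=n/2$ without comment; this does follow from $d$-regularity (counting edges gives $d|X|=d|Y|$), but it deserves a word since the $\frac{2d}{n}$ factor in the statement is exactly $\frac{d}{|X|}$ under this balance. Second, the step ``hence $\sigma_1(B)=d$'' needs the additional (standard) observation that $d$ is the largest eigenvalue of $A$ for a $d$-regular graph, so $\|B\|_{\mathrm{op}}=\rho_1=d$; exhibiting $\mathbf{1}_X,\mathbf{1}_Y$ as a singular pair with value $d$ only shows $\sigma_1\ge d$. Third, when you claim $B'$ has all singular values at most $\sigma_2(B)$, the cleanest justification is that $\mathbf{1}_Y$ is an eigenvector of $B^{T}B$ with eigenvalue $d^{2}=\sigma_1^{2}$, so on its orthogonal complement the top eigenvalue of $B^{T}B$ is $\sigma_2^{2}$; combined with $B'\mathbf{1}_Y=0$ and $B'v=Bv$ for $v\perp\mathbf{1}_Y$, this gives $\|B'v\|_2\le\sigma_2\|v\|_2$ for all $v$. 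None of these affect correctness; the argument, with these gaps filled, is exactly the usual biadjacency-SVD proof of the bipartite expander mixing lemma.
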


Before stating our algorithm we fix some notation. Let $V=V(G)$, and if $G$ is bipartite let $(X,Y)$ be a balanced bipartition. For convenience let $A,B$ both denote $V$ if $G$ is not bipartite, and let $A=X$, $B=Y$ if $G$ is bipartite. For $U,W\subseteq V$, let $N(U)$ be the set of vertices not in $U$ which are adjacent to a vertex in $U$. Unless otherwise indicated, all vertices and adjacency relations are within the graph $G$.

\begin{algorithm} \label{algorithm description} Let $H$ and $G$ be graphs satisfying the assumptions of Theorem \ref{algorithm works}.
    \begin{enumerate}
        \item Initialize $V_1=\cdots=V_m=\emptyset$.
        \item For each $i$, add $n^{\frac{1-a}{2}}(\log n)^{\frac{1}{2}}$ vertices in $G$ to $V_i$ such that each vertex added is not already contained in any other $V_j$ and is also at distance at least $3$ from every other vertex in $V_i$. If $G$ is bipartite then all added vertices should also belong to $X$.
        \item For $1\le i\le m$, define $\mathcal S_i=\{j:j\ne i,e(V_i,V_j)=0\}$ and $s_i=|\mathcal S_i|$. If for every $i$ we have $s_i<n^{\frac{1-a}{2}}(\log n)^{\frac{1}{2}}$, proceed to step 4. Otherwise, for each $1\le i\le m$, choose the vertex in $B-(N(V_i)\cup N(N(V_i)))-(V_1\cup\cdots\cup V_m)$ which is adjacent to the maximum number of vertices in $\bigcup_{j\in\mathcal S_i}V_j$ and add it to $V_i$. Return to the beginning of step 3.
        \item For each $i$, for each $j\in\mathcal S_i$, create a new vertex in $V_i$ and make it adjacent to some vertex in $V_j$ which was added to $V_j$ in one of the previous steps.
\end{enumerate}
\end{algorithm}

\begin{proof}[Proof of Theorem \ref{algorithm works}] Is it possible to perform step 2, because at any point during step 2 when we want to add a vertex to $V_i$, we have
$$
\begin{aligned}
    |A-(N(V_i)\cup N(N(V_i)))-(V_1\cup\cdots\cup V_m)|&\ge\frac{n}{2}-d^2|V_i|-m\cdot|V_1|\\ &\ge \frac{n}{2}-n^{2a+\frac{1-a}{2}}(\log n)^{\frac{1}{2}}-\frac{1}{4}n^{\frac{1+a}{2}+\frac{1-a}{2}}.
\end{aligned}$$
The choice of $a$ ensures that both exponents in the right-hand side are at most 1 and the right-hand side is positive for $n$ sufficiently large.

To show it is possible to perform step 3, we track how the number $s_i$ changes each time a vertex is added. 
We assume that $i$ is a value for which the inequality 
$s_i \geq n^{ \frac{1-a}{2} } ( \log n )^{1/2}$
is true.  
Let $s_i(t)$ be the value of $s_i$ after $t$ iterations of step 3 have been performed. Now during any of the first $2an^{\frac{1-a}{2}}(\log n)^{\frac{1}{2}}$ iterations of step 3 we have
$$\begin{aligned}|B-(N(V_i)\cup N(N(V_i)))-(V_1\cup\cdots\cup& V_m)|\ge  \frac{n}{2}-n^{2a}(n^\frac{1-a}{2}(\log n)^{\frac{1}{2}}+2an^{\frac{1-a}{2}}(\log n)^{\frac{1}{2}})\\
&-\frac{1}{4}\frac{n^{\frac{1+a}{2}}}{(\log n)^{\frac{1}{2}}}(n^\frac{1-a}{2}(\log n)^{\frac{1}{2}}+2an^{\frac{1-a}{2}}(\log n)^{\frac{1}{2}}).\end{aligned}$$
The choice of $a$ ensures that the right-hand side is $\Theta(n)$. Set $W=B-(N(V_i)\cup N(N(V_i)))-(V_1\cup\cdots\cup V_m)$. To apply the expander mixing lemma we would like
$$\frac{d\left|\bigcup_{j\in\mathcal S_i(t)}V_j\right||W|}{n}\gg\rho\sqrt{\left|\bigcup_{j\in\mathcal S_i(t)}V_j\right||W|},$$
in other words $\Theta(n)=|W|\gg\frac{\rho^2n^2}{d^2\left|\bigcup_{j\in\mathcal S_i(t)}V_j\right|}$. Noting that 
$\rho= O (d^{\frac{1}{2}})$ 
and $\left|\bigcup_{j\in\mathcal S_i(t)}V_j\right|\ge s_i(t)n^{\frac{1-a}{2}}(\log n)^{\frac{1}{2}}\ge n^{1-a}(\log n)$ we see that this condition is met. Therefore,
$$e\left(W,\bigcup_{j\in\mathcal S_i(t)}V_j\right)\ge\frac{1}{2}\frac{ds_i(t)n^{\frac{1-a}{2}}(\log n)^{\frac{1}{2}}|W|}{n}=\frac{1}{2n^\frac{1-a}{2}}|W|s_i(t)(\log n)^{\frac{1}{2}}.$$
Thus, there exists a vertex in $W$ with at least $\frac{1}{2n^\frac{1-a}{2}}s_i(t)(\log n)^{\frac{1}{2}}$ neighbors in $\left|\bigcup_{j\in\mathcal S_i(t)}V_j\right|$, and so in iteration $t$ of step 3, a vertex is added to $V_i$ which has at least $\frac{1}{2n^{\frac{1-a}{2}}}s_i(t)(\log n)^{\frac{1}{2}}$ neighbors in $\left|\bigcup_{j\in\mathcal S_i(t)}V_j\right|$. Now since each of the sets $V_j$ has all of its vertices at distance at least 3 from each other, each neighbor of the vertex added belongs to a different set $V_j$ (where $j\in\mathcal S_i(t)$). It follows that
$$s_i(t+1)\le s_i(t)-\frac{1}{2n^\frac{1-a}{2}}(\log n)^{\frac{1}{2}}s_i(t).$$
Using $s_i(0)\le m\le n^\frac{1+a}{2}/(\log n)^{\frac{1}{2}}$, we then have that as long as $|W|$ is large enough as above, 
$$s_i(t)\le \frac{n^{\frac{1+a}{2}}}{(\log n)^{\frac{1}{2}}}\left(1-\frac{1}{2n^\frac{1-a}{2}}(\log n)^{\frac{1}{2}}\right)^t.$$
Using the estimate $\log(1-x)\le -x$, we therefore have
$$\log s_i(t)\le\left(\frac{1+a}{2}\log n-\frac{1}{2}\log\log n\right)-t\frac{(\log n)^{\frac{1}{2}}}{2n^{\frac{1-a}{2}}}.$$
Step 3 terminates when for all $i$, $s_i(t) < n^{\frac{1-a}{2}}(\log n)^{\frac{1}{2}}$. Solving for $t$ in the inequality
$$\left(\frac{1+a}{2}\log n-\frac{1}{2}\log\log n\right)-t\frac{(\log n)^{\frac{1}{2}}}{2n^{\frac{1-a}{2}}}\le\log\left(n^{\frac{1-a}{2}}(\log n)^{\frac{1}{2}}\right)$$
we see that this is guaranteed to happen by iteration $t=2a(\log n)^{\frac{1}{2}}n^{\frac{1-a}{2}}.$

In step 4, none of the vertices created creates a copy of $H$, since $H$ has no vertex of degree 1. 

We calculate the maximum size of a part in the resulting $(m,k)$-graph. For each $V_i$, $n^{\frac{1-a}{2}}(\log n)^{\frac{1}{2}}$ vertices were added in step 2, at most $2an^{\frac{1-a}{2}}(\log n)^{\frac{1}{2}}$ were added in step 3, and at most $n^{\frac{1-a}{2}}(\log n)^{\frac{1}{2}}$ were added in step 4. Therefore we have $f(m,H)\le 4n^{\frac{1-a}{2}}(\log n)^{\frac{1}{2}}$. Choose $C$ such that $C\left(\frac{1}{4}\right)^{\frac{2}{1+a}-1}\left(\frac{1}{2}\right)^{\frac{1}{1+a}}\ge 4$. Using the fact that
$$\log m=\frac{1+a}{2}\log n-\log4-\frac{1}{2}\log\log n\ge\frac{1}{2}\log n$$
for $n$ large enough, we find that
$$Cm^{\frac{2}{1+a}-1}(\log m)^{\frac{1}{1+a}}\ge 4n^{\frac{1-a}{2}}(\log n)^{\frac{1}{2}}\ge f(m,H).$$
\end{proof}

\bigskip

\section{Concluding Remarks}\label{conclusion}

In light of Theorem \ref{AM:theorem2}, it would be interesting to determine an asymptotic formula for $f(r , K_{2,d+1})$.  It seems possible that $f(r, K_{2,d+1}) = (r/d)^{1/3} + o(r^{1/3})$ for all $d \geq 1$.  This is true for $d= 1$ by the result of \cite{Taranchuk2024}.     

We proved that $f(r, \theta_{3,4}) = \Theta ( r^{3/5})$.
It was proved by Conlon \cite{Conlon} that 
for every $\ell \geq 2$, there is a $K$ such that 
$\textup{ex}(N , \theta_{K , \ell} ) = \Omega_\ell ( N^{1 + 1/\ell})$, and an upper bound that matches in order of magnitude is in \cite{FS}.  It would be interesting to determine the order of magnitude for $f( r, \theta_{t , \ell})$ for $t\geq K$ and in such cases where it can be done it would further be interesting to determine the dependence on $t$.

\bibliographystyle{plain}
	\bibliography{references.bib}
	
\end{document}